\newtheorem{thm}{Theorem}[section]
\newtheorem{prop}[thm]{Proposition}
\newtheorem{lemme}[thm]{Lemma}
\newtheorem{props}[thm]{Properties}
\newtheorem{define}[thm]{Definition}
\newtheorem{def-prop}[thm]{Definition-Proposition}
\newtheorem{ex}[thm]{Example}
\newtheorem{rem}[thm]{Remark}
\newtheorem{hypo}[thm]{Hypothesis}
\newcommand\A{\mathcal A}
\newcommand\ane{\mathcal O}
\newcommand\N{\mathbb N}
\newcommand\Z{\mathbb Z}
\newcommand\Q{\mathbb Q}
\newcommand\R{\mathbb R}
\newcommand\C{\mathbb C}
\newcommand\D{\mathcal D}
\newcommand\abs[1]{\left|#1\right|}
\newcommand\norm[1]{\left\|#1\right\|}
\newcommand\codim{\operatorname{codim}}
\newcommand\K{\mathbb K}
\newcommand\defi[1]{\textbf{#1}}
\newcommand\rk{\operatorname{rank}}
\newcommand\im{\operatorname{Im}}
\newcommand\ab{\operatorname{ab}}
\newcommand\cod{\operatorname{cod}}
\newcommand\interior{\operatorname{int}}
\renewcommand\S{\mathbb{S}}
\newcommand\T{\mathbb{T}}
\newcommand{\restr}[1]{|_{#1}}
\author{Paul MERCAT}
\title{Geometrical representation of subshifts for primitive substitutions}
\begin{document}

\maketitle

\begin{abstract}
	For any primitive substitution whose Perron eigenvalue is a Pisot unit, we construct a domain exchange that is measurably conjugate to the subshift.
	And we give a condition for the subshift to be a finite extension of a torus translation.
	For the particular case of weakly irreducible Pisot substitutions, we show that the subshift is either a finite extension of a torus translation or its eigenvalues are roots of unity.
	And we provide an algorithm to compute eigenvalues of the subshift associated with any primitive pseudo-unimodular substitution.
\end{abstract}

\tableofcontents

\section{Introduction and main results}

In the seminal paper~\cite{Rauzy}, G. Rauzy constructed a geometrical representation of the subshift associated with some particular substitution. He constructed a compact subset of $\R^2$ which is called now Rauzy fractal, and that tiles the plane and gives a measurable conjugacy between the subshift and a translation on the torus $\T^2$. It was generalized later by many people as Arnoux-Ito, see~\cite{AI}.

For irreducible Pisot unit substitutions, it is conjectured that Rauzy fractals give a measurable conjugacy between the subshift and a translation on a torus.
What is known is that it gives a finite extension of a torus translation:

\begin{thm}[Host, unpublished] \label{thm:host}
	Let $\sigma$ be an irreducible Pisot unimodular substitution over an alphabet of $d+1$ letters. Then the uniquely ergodic subshift $(\Omega_\sigma, S)$ is a finite extension of a translation on the torus $\T^d$. 
\end{thm}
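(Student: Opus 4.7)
The plan is to build an explicit continuous factor map $\pi \colon \Omega_\sigma \to \T^d$ that conjugates $S$ with a minimal translation, and then to bound the cardinality of its fibres using the substitutive structure.

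First, I would set up the geometric framework. Let $M_\sigma$ denote the incidence matrix of $\sigma$. By primitivity, Perron--Frobenius provides the Pisot number $\beta > 1$ with positive eigenvector $v_\beta$; unimodularity gives $|\det M_\sigma| = 1$; and the Pisot condition forces every other eigenvalue of $M_\sigma$ to have modulus strictly less than $1$. Decompose $\R^{d+1} = \R v_\beta \oplus \Es$ with $\Es$ the $d$-dimensional $M_\sigma$-invariant contracting subspace, and let $P \colon \R^{d+1} \to \Es$ be the projection along $v_\beta$. Irreducibility (the minimal polynomial of $\beta$ has degree $d+1$) makes $P$ injective on $\Z^{d+1}$, and using unimodularity one checks that $L := P\bigl(\{x \in \Z^{d+1} : \sum x_i = 0\}\bigr)$ is a full-rank lattice of $\Es$, so $\T^d := \Es/L$ is a genuine $d$-torus.

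Second, I would construct the factor map. Any $\omega \in \Omega_\sigma$ admits a $\sigma$-adic (Dumont--Thomas) prefix-suffix expansion producing a sequence of bounded words $(p_k)_{k \geq 0}$ with $p_k$ a prefix of some $\sigma(a_k)$, $a_k \in \A$. Define the geometric representation
\[
\widetilde{\pi}(\omega) \;:=\; \sum_{k \geq 0} (M_\sigma|_{\Es})^{k} \, P\bigl(\ell(p_k)\bigr) \;\in\; \Es,
\]
where $\ell \colon \A^* \to \Z^{d+1}$ is the abelianization. The series converges absolutely since $M_\sigma|_{\Es}$ is a strict contraction (Pisot) while the terms $P(\ell(p_k))$ are uniformly bounded. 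Set $\pi := \widetilde{\pi} \bmod L$. By construction, the self-similarity $\widetilde{\pi} \circ \sigma = (M_\sigma|_{\Es}) \circ \widetilde{\pi}$ holds, and a direct computation of how $\widetilde{\pi}$ changes under $S$ gives $\pi \circ S = R_\alpha \circ \pi$, where $R_\alpha$ is the translation by $\alpha := P(e_a) \bmod L$, independent of $a \in \A$ since $P(e_a - e_b) \in L$. Continuity is inherited from uniform convergence of the series; surjectivity follows from minimality of $(\Omega_\sigma, S)$ together with the fact that $R_\alpha$ has dense orbit.

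The main obstacle will be to bound the cardinality of the fibres of $\pi$. The key tool is Moss\'e's recognizability theorem: every $\omega \in \Omega_\sigma$ has a unique decomposition $\omega = S^{i(\omega)} \sigma(\tau(\omega))$ with $\tau(\omega) \in \Omega_\sigma$ and $0 \leq i(\omega) < |\sigma(\tau(\omega)_0)|$. Given $\omega, \omega'$ with $\pi(\omega) = \pi(\omega')$, i.e.\ $\widetilde{\pi}(\omega) - \widetilde{\pi}(\omega') \in L$, apply desubstitution; the self-similarity relation then translates this condition into $\widetilde{\pi}(\tau(\omega)) - \widetilde{\pi}(\tau(\omega')) \in (M_\sigma|_{\Es})^{-1} L$ modulo an explicit bounded correction coming from $i(\omega), i(\omega')$. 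Iterating, and using that $(M_\sigma|_{\Es})^{-1}$ strictly expands while corrections stay bounded, the difference $\widetilde{\pi}(\tau^n \omega) - \widetilde{\pi}(\tau^n \omega')$ is eventually forced into a fixed compact subset of $\Es$; by unimodularity this subset meets only finitely many translates of $L$. Hence the trajectory of the pair $(\omega, \omega')$ under iterated desubstitution is parametrised by an infinite walk in a finite labelled graph (the coincidence graph of $\sigma$), yielding a uniform bound on $\#\pi^{-1}(x)$; concatenating, $\pi$ is a continuous surjection with uniformly bounded fibres conjugating $S$ to the minimal rotation $R_\alpha$ on $\T^d$, which is exactly the statement that $(\Omega_\sigma, S)$ is a finite topological extension of $(\T^d, R_\alpha)$.
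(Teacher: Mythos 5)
Your construction of the factor map is sound and is essentially the standard one (it is the map $\phi_{V'}$ of this paper, i.e.\ the usual Rauzy fractal projection composed with reduction modulo the lattice $L$); the intertwining $\pi\circ S=R_\alpha\circ\pi$, continuity, and surjectivity are fine modulo standard facts you assert without proof (recognizability for continuity of the prefix--suffix expansion, rational independence of the coordinates of $\alpha$ for minimality of $R_\alpha$). The genuine gap is in the last step, which is exactly where the content of Host's theorem lies. Your desubstitution bookkeeping can indeed be put into a finite state space: writing $\widetilde\pi(\tau^n\omega)-\widetilde\pi(\tau^n\omega')=P(z_n)$ with $z_n\in\Z^{d+1}$, unimodularity keeps $z_n$ integral, the stable component is bounded because both points lie in the compact Rauzy fractal, and the expanding component stays bounded because the initial vector $z_0$ has sum zero and the recursion contracts the Perron direction by $\beta^{-1}$ (none of this is spelled out, and note $(M|_{E^s})^{-1}L\neq L$, so ``finitely many translates of $L$'' is not the right statement --- what is true is that the bounded strip contains finitely many admissible integer vectors $z_n$). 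But the conclusion ``the pair is parametrised by an infinite walk in a finite labelled graph, yielding a uniform bound on $\#\pi^{-1}(x)$'' is a non sequitur: a finite labelled graph carries, in general, infinitely (even uncountably) many infinite walks, so finiteness of the state space does not bound the number of points $\omega'$ in a fibre. Nothing in your argument rules out unbounded branching, and this is not a technicality: for reducible substitutions the very same cut-and-project/graph structure exists, yet the examples in Section~9 of this paper are (presumably) infinite extensions of torus translations. The finiteness of fibres is precisely the hard part and needs a further input.

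For comparison, the route followed here (and by Durand--Petite, whose proof this paper builds on) is measure-theoretic rather than combinatorial: after proprification (Theorem~\ref{thm:dp}), one proves that the projection is injective on worms, establishes the $\epsilon$-separation (cut-and-project) Lemma~\ref{lem:sep}, deduces that the pieces $R_a$ have positive measure and are closures of their interiors (Sirvent--Wang), obtains measure-disjointness of the subdivisions by the Perron--Frobenius argument of Lemma~\ref{lem-Perron}, and uses properness (or strong coincidence) to get a domain exchange on $R$ measurably conjugate to $(\Omega_\sigma,S)$; the extension is then finite because the bounded set $R$ meets only finitely many translates of a fundamental domain of the lattice, so the a.e.\ defined map $R\to\T^d$ is finite-to-one. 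If you want to push your purely topological approach through (uniformly finite fibres everywhere), you would be reproving results of Barge--Kwapisz on coincidence rank, which require substantially more than the finite-graph observation; as it stands, the fibre-finiteness step of your proposal is unproved.
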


Recently, F. Durand and S. Petite gave a very interesting proof of this result in~\cite{durand-petite}.
Their starting point is to construct a proper substitution whose subshift is conjugate to the subshift of the first substitution (see Theorem~\ref{thm:dp}). But this construction doesn't preserve irreducibility.
So they have to deal with reducible substitutions.
Moreover, primitive reducible substitutions naturally arise from some dynamical systems (see for example the $9$-letter substitution in~\cite{ABB}, coming from an interval exchange transformation).

We use the strategy of the proof of F. Durand and S. Petite to extend Theorem~\ref{thm:host} to a large class of pseudo-unimodular substitutions, i.e. substitutions whose product of all non-zero eigenvalues of the incidence matrix equals $\pm1$.

\begin{thm} \label{thm:ef}
	Let $\sigma$ be a proper primitive pseudo-unimodular substitution.
	Assume that the Perron eigenvalue of the incidence matrix is a Pisot number $\beta$ of degree $d+1$.
	And assume that every generalized eigenvector for every other eigenvalue
	of modulus $\geq 1$, has sum zero.
	Then, the subshift $(\Omega_\sigma, S)$ is a finite extension of a minimal translation on the torus $\T^d$.
\end{thm}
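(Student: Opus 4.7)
The plan is to follow the Durand--Petite strategy used for Theorem~\ref{thm:host}, skipping the proprification step since $\sigma$ is already proper. I would begin with a spectral decomposition of $\R^{d+1}$ under the incidence matrix $M$. The Perron line carries $\beta$; the Pisot hypothesis puts the $d$ Galois conjugates of $\beta$ on a contracting subspace $E^s_\beta$ of real dimension $d$; the remaining eigenvalues of $M$ split between a genuinely contracting part and a ``dangerous'' part $E^{\geq 1}$ consisting of eigenvalues of modulus $\geq 1$ other than $\beta$. The additional hypothesis places $E^{\geq 1}$ inside the hyperplane $H = \{v : \sum v_i = 0\}$ of sum-zero vectors.

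Next I would construct a geometric representation $\phi : \Omega_\sigma \to E^s_\beta / \Lambda \cong \T^d$, where $\Lambda$ is an $M$-invariant lattice supplied by the pseudo-unimodular hypothesis (typically coming from the ring of integers of $\Q(\beta)$). For $x \in \Omega_\sigma$ one forms the Abelianized prefixes $\ell(x_0 \cdots x_{n-1})$, projects them onto $E^s_\beta$, and proves convergence modulo $\Lambda$. Contraction of $M$ on $E^s_\beta$ handles the Galois directions; the subtle point is that the non-contracting directions $E^{\geq 1}$ only contribute a term that, thanks to the sum-zero condition and the substitutive structure, depends solely on the prefix length and thus drops out of the relevant differences. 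One then checks $\phi \circ S = T_v \circ \phi$ for an explicit translation vector $v$; minimality of $T_v$ on $\T^d$ follows from primitivity of $\sigma$ together with the algebraic irrationality forced by $\beta$ being a Pisot unit of degree $d+1$.

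The final step is to show that fibers of $\phi$ are uniformly finite. Here I would reuse the Host / Durand--Petite mechanism: two points $x, y$ with $\phi(x) = \phi(y)$ are compared through their desubstitution hierarchies, and properness of $\sigma$ turns such a coincidence into a finite list of combinatorial possibilities, yielding a uniform bound on $|\phi^{-1}(\phi(x))|$ and hence a finite extension.

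I expect the main obstacle to be the construction of $\phi$: in the presence of eigenvalues of modulus $\geq 1$ other than $\beta$, the prefix Abelianizations grow and their projections onto the full non-Perron subspace do not stabilize. The sum-zero hypothesis is tailored precisely to neutralize this growth after projection to the torus, but making this rigorous---especially if there are Jordan blocks associated to eigenvalues of modulus exactly $1$, which would produce polynomial drift---will require a careful algebraic tracking through the substitution/desubstitution hierarchy, exploiting that the Abelianization cocycle takes values in the hyperplane $H$ modulo prefix-length dependence. This is where the hypothesis will do the heavy lifting.
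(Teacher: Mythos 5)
Your outline identifies the right objects (a torus factor built from the non-Perron spectral data, properness used to control fibers), but both of the steps where the theorem is actually hard are missing, and the second one is attacked by a mechanism that is not the one that works. For the factor map, projecting abelianized prefixes onto the contracting Galois space $E^s_\beta$ modulo an ``$M$-invariant lattice coming from the ring of integers of $\Q(\beta)$'' does not produce the translation in the reducible setting: there is no canonical lattice in $E^s_\beta$, the projection of $\Z^A$ onto it along the other generalized eigenspaces is in general dense rather than discrete, and nothing in your sketch makes $\phi(Sx)-\phi(x)$ independent of the letter $x_0$ modulo the lattice. The paper's mechanism is arithmetic, not geometric: one first proves the existence of $d$ rationally independent continuous eigenvalues $e^{2i\pi\alpha_i}$ of the subshift by producing integer covectors $w_1,\dots,w_d$ orthogonal to the generalized eigenspaces of all eigenvalues of modulus $\geq 1$ other than $\beta$ (here the sum-zero hypothesis, Lemma~\ref{lem:conj} and pseudo-unimodularity are used to show that $(1,\dots,1)$ is such a covector and that enough independent ones exist), sets $\alpha_i = w_i v_0$, and then uses the matrix $V'$ with rows $\alpha_i(1,\dots,1)-w_i$: Lemma~\ref{lem:charw2} gives $V'M^n \to 0$ (handling Jordan blocks), and $w_i \in \Z^A$ gives $V'e_a \equiv \alpha \bmod \Z^d$, which is exactly the translation property. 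Your phrase that the dangerous directions ``contribute a term depending only on the prefix length'' is not the actual mechanism and does not survive Jordan blocks of modulus-one eigenvalues; this is precisely the content of Proposition~\ref{prop:vp} and Theorem~\ref{thm:vp} that your sketch would have to reprove.

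The finiteness step is the more serious gap. Asserting that two points with the same image in $\T^d$ are ``compared through their desubstitution hierarchies'' and that properness yields ``a finite list of combinatorial possibilities'' is a restatement of the conclusion, not an argument; nothing in the combinatorics of proper substitutions by itself bounds the fibers of a measurable torus factor, and the paper's examples of presumably infinite extensions (for instance Example~\ref{ex:2pisots2}) show that finiteness is genuinely delicate and hinges on the specific hypotheses. The proof in the paper is geometric: properness gives, via Hypothesis~\ref{hyp:rauzy} and Properties~\ref{props:de}, a measurable conjugacy of $(\Omega_\sigma,S)$ with a domain exchange on a usual Rauzy fractal $R=\phi_V(\Omega_\sigma)$, where $V=V'\Pi$ and $\Pi$ is the projection onto $\im(M)$ along $\ker(M)$; one checks $\ker(V)=\ker((M-\beta I)MQ(M))$ so that $V$ is an admissible projection. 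The key computation is then that $VM^n = V'M^n$ for all $n\geq 1$, so in the Dumont--Thomas expansions the two maps $\phi_V$ and $\phi_{V'}$ differ only in the $n=0$ term; hence $\psi=\phi_{V'}\circ\phi_V^{-1}$ is a translation by $(V'-V)t$ on each of the finitely many pieces indexed by the first abelianized prefix $t$, so it is finite-to-one, and composing with the covering $\R^d\to\T^d$ restricted to the bounded set $R'$ preserves finiteness. Without this comparison between the two Rauzy fractals (or an equivalent device), your proposal has no proof that the extension is finite.
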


Moreover, we show that the Pisot hypothesis is necessary (see Proposition~\ref{prop:pisot}).
This theorem, together with Theorem~\ref{thm:dp}, permits to check easily that many non-proper substitutions have a subshift which is a finite extension of a torus translation.
However, we don't know if the reciprocal of Theorem~\ref{thm:ef} true.
But in the particular case of weakly irreducible Pisot substitutions (i.e. each eigenvalue of the primitive incidence matrix is a Pisot unit or conjugate, zero, or a root of unity),
we have the following alternative.

\begin{thm} \label{thm:wip}
	Let $\sigma$ be a weakly irreducible Pisot substitution.
	Then one of the following is true
	\begin{itemize}
		\item eigenvalues of the subshift $(\Omega_\sigma, S)$ are roots of unity,
		\item the subshift $(\Omega_\sigma, S)$ is a finite extension of a minimal translation of the torus $\T^d$, where $d+1$ is the degree of the Pisot number.
	\end{itemize}
	Moreover, there is an algorithm to decide in which case we are.
\end{thm}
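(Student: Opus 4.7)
The plan is to reduce the theorem to Theorem~\ref{thm:ef} via the Durand--Petite proprification (Theorem~\ref{thm:dp}), and then handle separately the situation in which the hypothesis of Theorem~\ref{thm:ef} fails, by passing to a power.

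First, use Theorem~\ref{thm:dp} to replace $\sigma$ by a proper primitive substitution $\tau$ with $(\Omega_\tau, S)$ topologically conjugate to $(\Omega_\sigma, S)$. One checks that proprification preserves the weakly irreducible Pisot structure: the Perron eigenvalue is still the same Pisot unit $\beta$ of degree $d+1$, and every other non-zero eigenvalue of $M_\tau$ is either a Galois conjugate of $\beta$ (of modulus $<1$), or a root of unity. In particular $\tau$ is proper, primitive, pseudo-unimodular, and the eigenvalues of $M_\tau$ of modulus $\geq 1$ other than $\beta$ are all roots of unity.

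Examine the generalized eigenvectors attached to those roots of unity. If each of them has zero coordinate-sum, Theorem~\ref{thm:ef} applies directly to $\tau$ and yields the second alternative. Otherwise, some generalized eigenvector of nonzero sum is associated to a root of unity $\zeta$; such a vector produces a non-constant continuous eigenfunction of $(\Omega_\sigma, S)$ whose dynamical eigenvalue is a nontrivial root of unity. Let $n$ be the least common multiple of the orders of all such root-of-unity obstructions. The goal is then to establish that $(\Omega_\sigma, S^n)$ is weakly mixing, which amounts to showing that every dynamical eigenvalue of $(\Omega_\sigma, S)$ is a root of unity of order dividing $n$. For this I would rely on the dictionary between dynamical eigenvalues of the subshift and generalized eigenvectors of $M_\tau$ --- the same dictionary underlying the eigenvalue algorithm announced in the abstract. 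This dictionary should say that each dynamical eigenvalue comes either from a root-of-unity block of $M_\tau$, hence is rational, or from the Perron block through a cocycle construction that requires exactly the zero-sum condition we have assumed to fail; hence under failure of the condition only rational eigenvalues survive, and a suitable power kills them.

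The algorithm is then immediate: proprify, compute the Jordan decomposition of $M_\tau$, and test the zero-sum condition on generalized eigenvectors for modulus-$\geq 1$ non-Perron eigenvalues. The main obstacle is the spectral dictionary just sketched; concretely, one must show that failure of the zero-sum condition really does force all dynamical eigenvalues of $(\Omega_\sigma, S)$ to be roots of unity, ruling out ``hidden'' Perron-type continuous eigenvalues surviving through subtler generalized-eigenvector data. Establishing this in a way that is both clean and effective is what upgrades Theorem~\ref{thm:ef} into the strict dichotomy of the present statement.
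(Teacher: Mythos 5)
Your overall route is the same as the paper's: proprify with Theorem~\ref{thm:dp}, split according to whether every generalized eigenvector of a non-Perron eigenvalue of modulus $\geq 1$ has sum zero, apply Theorem~\ref{thm:ef} when it does, and otherwise argue that only rational dynamical eigenvalues survive so that some power of the shift is weakly mixing. But you leave precisely the decisive step unproved, and you say so yourself: you never establish that failure of the zero-sum condition forces every dynamical eigenvalue of $(\Omega_\sigma,S)$ to be a root of unity of bounded order. The paper closes this gap with Theorem~\ref{thm:vp}, after two reductions you omit. First, replace $\sigma$ by a power so that the only root-of-unity eigenvalue of the incidence matrix is $1$; this is what makes the relevant generalized eigenspace \emph{rational}, so a generalized eigenvector $v$ of sum $1$ for the eigenvalue $1$ can be chosen with rational coordinates. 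Second, Theorem~\ref{thm:vp} imposes \emph{simultaneous} conditions: the same line vector $w\in\Z^A$ must satisfy $wv'=\alpha$ for every sum-one generalized eigenvector $v'$ attached to a modulus-$\geq 1$ eigenvalue. Pairing $w$ against the rational vector $v$ gives $\alpha=wv\in\frac{1}{n}\Z$, where $n$ is chosen so that $nv\in\Z^A$; hence all eigenvalues are $n$-th roots of unity and $(\Omega_\sigma,S^n)$ is weakly mixing. Your ``either the root-of-unity block or the Perron block'' picture misstates this dictionary: it is exactly the fact that one $w$ must serve all blocks at once that forces rationality.

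Two further concrete problems. Your choice of $n$ as the least common multiple of the orders of the offending roots of unity is the wrong quantity: the denominators of the surviving eigenvalues are controlled by the denominators of the rational generalized eigenvector $v$ (equivalently of the values $wv$), not by the multiplicative orders of the roots of unity; already $\zeta=1$ (order one) can produce eigenvalues $e^{2i\pi k/m}$ with $m$ large, which your $n$ would not kill. Also, your claim that a single nonzero-sum generalized eigenvector for a root of unity ``produces a non-constant continuous eigenfunction'' is unjustified: Proposition~\ref{prop:vp} builds eigenfunctions only from a $w$ compatible with \emph{all} generalized eigenvectors of modulus-$\geq 1$ eigenvalues, and nothing guarantees a nontrivial such eigenvalue exists in this case (nor is it needed; the paper's argument only bounds the possible eigenvalues). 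With these repairs the algorithmic claim follows as you indicate, since the proprification is effective and the zero-sum test is linear algebra.
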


Notice that for unimodular substitutions, the first point implies that the subshift is weakly mixing by Lemma~\ref{lem:unimod}.
We give a geometrical representation of the subshift for any primitive substitutions whose Perron eigenvalue is a Pisot unit.

\begin{thm} \label{thm:de}
	Let $\sigma$ be a primitive substitution such that the Perron eigenvalue of its incidence matrix is a unit Pisot number of degree $d+1$.
	Then, the uniquely ergodic subshift $(\Omega_\sigma, S)$ is measurably isomorphic to a domain exchange $(R,E, \lambda)$, with $R \subseteq \R^d$.
\end{thm}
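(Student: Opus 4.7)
The plan is to adapt the classical Rauzy-fractal construction: after reducing to a proper substitution via Theorem~\ref{thm:dp}, project the abelianisations of prefixes of a periodic point onto the contracting subspace associated to the Galois conjugates of $\beta$, and verify that the resulting compact set, with its piecewise-translation dynamics, is measurably isomorphic to $(\Omega_\sigma,S)$. The Pisot-unit hypothesis plays two distinct roles: the Pisot part keeps the projections bounded, producing a compact set $R \subseteq \R^d$; the unit part will force the induced set equation to be measure-preserving, which is what will rule out large overlaps between the pieces.

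First I apply Theorem~\ref{thm:dp} to assume $\sigma$ proper, with incidence matrix $M \in M_{k}(\Z)$ where $k$ is the alphabet size. Let $P$ be the minimal polynomial of $\beta$ and decompose $\R^k = V \oplus V'$ into $M$-invariant subspaces, where $V$ (of dimension $d+1$) is the characteristic subspace corresponding to the roots of $P$. Inside $V$, split $V = E^u \oplus E^s$, with $E^u$ the one-dimensional Perron line and $E^s$ of real dimension $d$ corresponding to the $d$ Galois conjugates of $\beta$ (all of modulus $<1$). Let $\pi : \R^k \to E^s$ project parallel to $E^u \oplus V'$, fix a periodic point $u = u_0 u_1 \ldots \in \Omega_\sigma$, and for each letter $a$ set
\[
R_a \;=\; \overline{\bigl\{\pi(\ell(u_0 \ldots u_{n-1})) : n \geq 0,\ u_n = a\bigr\}},
\]
where $\ell(w) \in \Z^k$ is the abelianisation of $w$. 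Put $R = \bigcup_a R_a$ and define $E : R \to R$ by $E(x) = x + \pi(e_a)$ on $R_a$. Boundedness of each $R_a$ follows from the Pisot property: the abelianisations of prefixes stay in a uniformly bounded neighbourhood of $E^u$, hence project to a bounded set in $E^s$. The combinatorial intertwining $\phi(S^n u) := \pi(\ell(u_0 \ldots u_{n-1}))$ then extends by continuity to a map $\phi : \Omega_\sigma \to R$ satisfying $\phi \circ S = E \circ \phi$ wherever $\phi$ lands in a unique piece.

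The main obstacle is to promote this combinatorial statement to a genuine measurable isomorphism, i.e.\ to show that the pieces $R_a$ overlap only on Lebesgue-null sets and that $\phi$ pushes the unique invariant measure $\mu$ forward to (normalised) Lebesgue measure on $R$. The tool is the substitution-induced set equation
\[
M \cdot R_a \;=\; \bigcup_{\substack{b,\, p,\, s \\ \sigma(b)\, =\, p\, a\, s}} \bigl(R_b + \pi(\ell(p))\bigr),
\]
combined with $|\det(M|_V)| = 1$, which is the unit part of the hypothesis. This forces the vector of $d$-dimensional Lebesgue measures $(\lambda(R_a))_a$ to be an eigenvector of $M$ for the eigenvalue $\beta$, hence, by primitivity and unique ergodicity, proportional to the letter-frequency vector of $u$. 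Matching the total measure on both sides then shows that the right-hand union must be essentially disjoint, so overlaps are Lebesgue-null. This is the crux of the argument; once it is in hand, $\phi$ becomes a measure-preserving bijection modulo null sets and conjugates $(\Omega_\sigma, S, \mu)$ to the domain exchange $(R, E, \lambda)$, as claimed.
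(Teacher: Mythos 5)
Your overall route coincides with the paper's (proprify via Theorem~\ref{thm:dp}, build a Rauzy fractal by projecting prefix abelianizations onto the contracting space of the conjugates of $\beta$, read off a domain exchange), but the measure-theoretic core, which is where the real work lies, has genuine gaps. The set equation only yields the inequality $\lambda(R_a) \leq \frac{1}{\beta}\sum_{b \xrightarrow{t} a} \lambda(R_b)$; turning it into an equality (hence essential disjointness) requires Host's Perron--Frobenius argument (Lemma~\ref{lem-Perron}), and, more importantly, the whole ``match the total measures'' step is vacuous unless you first prove $\lambda(R_a) > 0$. Nothing in your proposal addresses positivity of Lebesgue measure, and that is exactly where the paper invests its effort: injectivity of the projection on worms (Lemma~\ref{lem:inj}), an $\epsilon$-separation of the digit sets via a cut-and-project argument (Lemma~\ref{lem:sep}), and then Lemma~\ref{lem:meas}. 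Note also that the unit hypothesis enters through these lemmas (e.g.\ $M^{-1}\Lambda = \Lambda$ for the lattice $\Lambda = V_P\Z^A$), not merely through a determinant count on the block $V$.

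Second, even granted essential disjointness of the subdivision inside each $R_a$, you still need (a) the $R_a$ themselves, and their translates $R_a + V e_a$, to be pairwise Lebesgue-disjoint: this is precisely where properness is used in the paper (every $\sigma(b)$ begins with the same letter $a_0$, so $\bigcup_b N R_b$ occurs inside the subdivision of $R_{a_0}$, and a measure count handles the translated union), whereas your proposal invokes properness only to quote Theorem~\ref{thm:dp} and never exploits it; and (b) a genuine almost-everywhere inverse of $\phi$: the paper builds a coding map on a full-measure $E$-invariant set, which requires each $R_a$ to have nonempty interior and boundary of zero Lebesgue measure (Sirvent--Wang, Theorem~\ref{thm:sw}, again fed by the separation lemma). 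Asserting that $\phi$ ``becomes a measure-preserving bijection modulo null sets'' skips all of this. A smaller point: in the reducible case the prefix abelianizations need \emph{not} stay in a bounded neighbourhood of $E^u$ (there may be expanding directions inside $V'$); boundedness of $R$ holds because your projection also kills $V'$, i.e.\ because $\sum_n \norm{\pi M^n} < \infty$, which is the argument of Lemma~\ref{lem:cv}.
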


In the particular case of irreducible substitutions, this result is due to F. Durand and S. Petite.
Notice that this result is similar to Theorem~6 in~\cite{BV}, but in this paper, we give a proof, and Theorem~\ref{thm:dp} of F. Durand and S. Petite permits us to avoid the strong coincidence hypothesis.
The set $R$ is a Rauzy fractal, but potentially for another substitution.
This result, together with Theorem~\ref{thm:ef} gives a generalization of the main theorem in~\cite{durand-petite}.

We also give a way to compute eigenvalues of the subshift associated with any primitive pseudo-unimodular substitution.

\begin{thm} \label{thm:vp}
	Let $\sigma$ be a primitive proper pseudo-unimodular substitution over an alphabet $A$.
	Then $e^{2 i \pi\alpha}$ is an eigenvalue of $(\Omega_\sigma, S)$ if and only if
	there exists a row vector $w \in \Z^A$ such that for every generalized eigenvector $v$ of $M_\sigma$ for eigenvalues of modulus $\geq 1$, we have
	\begin{itemize}
		\item $w v = \alpha$ if $v$ has sum $1$,
		\item $w v = 0$ if $v$ has sum $0$.
	\end{itemize}
\end{thm}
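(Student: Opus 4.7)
The plan is to translate the eigenvalue condition into an arithmetic convergence condition on iterates of $M_\sigma$ acting on integer vectors, and then to realize the emerging integer linear form as a line vector $w \in \Z^A$.

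For the forward direction, I take a continuous eigenfunction $f$---which exists because proper primitive substitutions are linearly recurrent, forcing any $L^2$ eigenfunction to admit a continuous representative---and normalize $f(u) = 1$ for a fixed point $u = \sigma(u)$, so that $f(S^n u) = e^{i 2\pi \alpha n}$. Continuity of $f$ then forces $\alpha(n_k - m) \to 0 \bmod \Z$ whenever $S^{n_k} u \to S^m u$ in $\Omega_\sigma$. By properness of $\sigma$ and the Dumont--Thomas numeration, the pertinent differences of indices take the form $n_k - m = \mathbf{1}^\top M_\sigma^{j_k} \mathrm{abel}(P_k)$ for suitable prefixes $P_k$ and levels $j_k \to \infty$. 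Decomposing $\mathrm{abel}(P_k)$ along the generalized eigenspaces of $M_\sigma$, the contributions from eigenvalues of modulus $< 1$ are $o(1)$ and cause no problem; the convergence requirement then forces $\alpha \cdot \mathbf{1}^\top v$ to agree, modulo $\Z$, with an integer linear form evaluated at $v$, for every generalized eigenvector $v$ of modulus $\geq 1$. Since the standard $\Z$-pairing realizes every such form as a $w \in \Z^A$, after adjusting $w$ by an integer translate one obtains precisely $wv = \alpha$ for sum-$1$ vectors and $wv = 0$ for sum-$0$ vectors.

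For the converse, given such $w$, I define $f$ on the orbit of $u$ by $f(S^n u) := e^{i 2 \pi \alpha n}$. Running the previous computation in reverse, the conditions on $w$ guarantee that $\alpha(n - m) \in \Z$ up to arbitrarily small error whenever $S^n u$ and $S^m u$ are close in $\Omega_\sigma$; hence $f$ extends continuously to $\Omega_\sigma$, and the relation $f \circ S = e^{i 2\pi \alpha} f$ passes from the dense orbit to the whole space. The main obstacle will be the handling of unit-modulus eigenvalues with non-trivial Jordan blocks, where Jordan chains naively produce polynomial-in-$n$ contributions that do not decay. Pseudo-unimodularity is essential here: the algebraic constraints it imposes on the generalized eigenvectors for eigenvalues of modulus $\geq 1$ allow the integrality conditions $wv = 0$ or $\alpha$, together with $w \in \Z^A$, to absorb these polynomial Jordan contributions modulo $\Z$ and so preserve convergence.
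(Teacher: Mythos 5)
Your proposal has the right general shape (reduce the eigenvalue condition to convergence of $\alpha(1,\dots,1)M_\sigma^n$ modulo $\Z^A$, then translate that into conditions on generalized eigenvectors), but it has a genuine gap at the decisive step of the forward direction, and it misplaces the role of pseudo-unimodularity. After you obtain, via properness and the Dumont--Thomas decomposition, that $\alpha(1,\dots,1)M_\sigma^n \to 0$ mod $\Z^A$ (this part is essentially Proposition~\ref{prop:dp}), you still have to produce a \emph{single integer} line vector $w$ with $(\alpha(1,\dots,1)-w)M_\sigma^n \to 0$. Your sentence ``since the standard $\Z$-pairing realizes every such form as a $w\in\Z^A$'' is exactly the assertion that needs proof, and it is false for a general integer matrix: from the convergence one only gets a sequence of integer vectors $a_n$ with $a_{n+1}=a_nM$ eventually, and to write $a_n=wM^n$ with $w\in\Z^A$ one must invert powers of $M$ on the relevant lattice. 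This is where pseudo-unimodularity is used in the paper (Lemma~\ref{ml}: on $\im(M^m)$ the restriction of $M^m$ has determinant $\pm1$, so integer vectors pull back to integer vectors, giving Lemma~\ref{lem:eqw}); without it one would a priori only obtain a rational $w$. Your argument never invokes pseudo-unimodularity in the forward direction, so this step is unjustified.

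Conversely, your worry in the backward direction about Jordan blocks, and your claim that pseudo-unimodularity is ``essential'' there, are both misplaced: the paper points out that this implication (Proposition~\ref{prop:vp}) needs neither properness nor pseudo-unimodularity. The hypotheses on $w$ force $(\alpha(1,\dots,1)-w)$ to vanish on the \emph{entire} generalized eigenspace of every eigenvalue of modulus $\geq 1$ (this is the easy induction on the nilpotency order in Lemma~\ref{lem:charw2}), so the polynomial-in-$n$ Jordan contributions are annihilated outright rather than ``absorbed modulo $\Z$''; what remains converges exponentially, and one then gets a continuous eigenfunction by extending $n\mapsto e^{2i\pi\alpha n}$ from the orbit of a fixed point, exactly as in the paper's construction via $\phi_{v}$ with $v=\alpha(1,\dots,1)-w$ (Lemma~\ref{lem:cv}). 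So the converse half of your plan is workable once the spurious appeal to pseudo-unimodularity is removed, but the forward half needs the lattice-pullback argument to be supplied.
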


In this theorem, it is enough to check the condition for any choice of bases of generalized eigenspaces for eigenvalues of modulus $\geq 1$.
And thanks to the proprification algorithm of F. Durand and S. Petite (see Subsection~\ref{ss:prop}), this theorem permits to completely describe and compute the set of eigenvalues of the subshift for any primitive pseudo-unimodular substitution.
We provide an algorithm and an implementation doing this computation (see section~\ref{sec:cvp}).
Notice that a different way to compute eigenvalues is given in~\cite{FMN}.

Note that Theorem~\ref{thm:vp} implies that eigenvalues of $(\Omega_\sigma, S)$ are in the form $e^{2i\pi \alpha}$ with $\alpha$ in a free $\Z$-module of finite rank in $\Q(\beta)$, where $\beta > 1$ is the Perron eigenvalue of $M_\sigma$.

The hypothesis that $\sigma$ is proper and pseudo-unimodular in Theorem~\ref{thm:vp} are needed only for the direct implication.
Moreover this hypothesis can be lightened.
All we need is the fact that if $e^{2 i \pi \alpha}$ is an eigenvalue of the subshift, then $\alpha (1, \dots, 1) M^n \xrightarrow[n \to \infty]{} 0$ mod $\Z^A$.
It is the case if there is no non-trivial coboundary and if the initials period is $1$ (see~\cite{host} for more details, and see~\cite{mosse}).
It is in particular the case if a power of the substitution is left-proper.
And in Theorem~\ref{thm:ef} the hypothesis that $\sigma$ is proper can also be replaced with the hypothesis that a power of $\sigma$ is left-proper, since it also implies the strong coincidence property.

\subsection{Organization of the paper}

We start in section~\ref{sec:def} by definitions and notations.
Then, in section~\ref{sec:gr}, we introduce the notion of generalized Rauzy fractal.
It permits to generalize the notion of Rauzy fractal to reducible substitutions, with various possible choice of projection.
It will permit to get translations on torus as factor of the subshift and also to get domain exchanges, but with different projections.
The section~\ref{sec:r} focus on the particular choice of projection giving usual Rauzy fractals, for which we get many nice properties.
In section~\ref{sec:de} we prove Theorem~\ref{thm:de}, by constructing usual Rauzy fractals that permits to get domain exchanges.
In section~\ref{sec:vp} we prove Theorem~\ref{thm:vp}.
Then in section~\ref{sec:cvp} we explicit an algorithm to compute the eigenvalues.
In section~\ref{sec:ef}, we prove Theorem~\ref{thm:ef} and Theorem~\ref{thm:wip}.
We finish with section~\ref{sec:ex} by giving examples.

\section{Definitions and notations} \label{sec:def}

This section aims to give all the definitions and notations that will be used in the paper.

\subsection{Algebraic numbers}
An \defi{algebraic number} $\beta$ is a root of a polynomial with rational coefficients.
The smallest unitary polynomial $P$ with rational coefficients such that $P(\beta) = 0$ is called \defi{minimal polynomial}.
The \defi{degree} of $\beta$ is the degree of its minimal polynomial.
Two different algebraic numbers are \defi{conjugate} if they have the same minimal polynomial.
An algebraic number $\beta$ is an \defi{algebraic integer} if coefficients of its minimal polynomial are in $\Z$.
An algebraic number $\beta$ is a \defi{unit} if it is an algebraic integer such that the constant term of its minimal polynomial is $\pm 1$. This is equivalent to saying that $\beta$ and $1/\beta$ are algebraic integers.
A \defi{Pisot number} is an algebraic integer $\beta > 1$ whose conjugates $\gamma$ satisfy $\abs{\gamma} <1$.
 
\subsection{Words and worms}
 An \defi{alphabet} is a finite set.
 If $A$ is an alphabet, then we denote by $A^*$ the set of \defi{finite words} over $A$.
 We denote by $\abs{u}$ the length of a word $u$.
 An \defi{occurrence} of a word $w$ in a word $u$ is the length $\abs{p}$ of a word $p$ such that $u = pws$, where $s \in A^*$ is a word.
 We denote by $\abs{u}_w$ the number of occurrences of $w$ in $u$.
 The \defi{abelianization} of a finite word $u \in A^*$ is the vector $\ab(u) = (\abs{u}_a)_{a \in A}$.
 For every letter $a \in A$, we denote $e_a = \ab(a)$. The family $(e_a)_{a \in A}$ is the canonical basis of $\R^A$.
 
 The set of \defi{bi-infinite words} over $A$ is $A^\Z$.
 \defi{Infinite words} over $A$ are elements of $A^\N$.
 For a (bi-)infinite word $u$, and for every $n \in \N$, we use the standard notation $u_{[0,n)} = u_0u_1...u_{n-1}$.
 For $n < 0$, we use the convention $\ab(u_{[0,n)}) = -\ab(u_{[-n,0)})$.

The usual metric on $A^\Z$ is defined for $u \neq v$ by
\[
	d(u,v) = 2^{-n}, \text{ where } n = \max\{ k \in \N \mid u_{[-k,k]} = v_{[-k,k]} \}.
\]
For this metric, $A^\Z$ is compact.
A \defi{subshift} $(\Omega, S)$ is a compact subset $\Omega \subseteq A^\Z$ which is invariant under the \defi{shift map}:
$
	S : \begin{array}{rcl}
			A^\Z &\to& A^\Z \\
			(u_i)_{i \in \Z} &\mapsto& (u_{i+1})_{i \in \Z}
		\end{array}.
$
The \defi{orbit} of a bi-infinite word $u \in A^\Z$ is
$\ane(u) = \{S^n u \mid n \in \Z\}$.
A subshift $(\Omega, S)$ is said to be \defi{minimal} if every orbit is dense in $\Omega$,
and \defi{aperiodic} if every orbit is infinite.

 We define the \defi{worm} associated to a bi-infinite word $u \in A^\Z$ as
\[
	W(u) = \{ \ab(u_{[0,n)}) \mid n \in \Z \}.
\]
We also define
\[
	W_a(u) = \{ x \in W(u) \mid x + e_a \in W(u) \}.
\]
The notion of worm can also be defined for infinite words in an obvious way (see~\cite{pytheas}).

\begin{props}
	\begin{itemize}
		\item For every $u \in \A^\Z$, we have $W(Su) = W(u) - \ab(u_0)$.
		\item We have
			\[
					W(u) = \bigcup_{a \in A} W_a(u) = \left( \bigcup_{a \in A} W_a(u) + e_a \right) \cup \{0\},
			\]
			and these unions are disjoint.
	\end{itemize}
\end{props}

\subsection{Matrices and subspaces}

    We denote by $I_n \in M_n(\N)$, or just $I$ when there is no ambiguity, the identity matrix.
	A matrix is said to be \defi{irreducible} if its characteristic polynomial is irreducible.
	Let $M \in M_n(\N)$ be a matrix.
	We say that $M$ is \defi{primitive} if there exists $n \geq 1$ such that every coefficient of $M^n$ is strictly positive.
	We say that $M$ is \defi{pseudo-unimodular} if the product of all its non-zero eigenvalues, counting multiplicities, equals $\pm1$.
	In particular, unimodular matrices are pseudo-unimodular.
	
	We use the following well-known theorem.
	\begin{thm}[Perron-Frobenius]
		If $M \in M_n(\N)$ is primitive, then $M$ has a simple real eigenvalue equal to the spectral radius of $M$.
		Moreover, the corresponding eigenvector can be chosen with strictly positive components.
	\end{thm}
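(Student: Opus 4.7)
The plan is to prove the three assertions in order: existence of a positive eigenvalue with a strictly positive eigenvector, identification of this eigenvalue with the spectral radius of $M$, and simplicity of this eigenvalue.

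First, for existence, I would apply Brouwer's fixed point theorem to the continuous map $T : x \mapsto Mx / \|Mx\|_1$ on the simplex $\Delta = \{x \in \R^n : x_i \geq 0, \sum_i x_i = 1\}$. This map is well-defined on $\Delta$: if $Mx = 0$ for some $x \in \Delta$, then $M^N x = 0$, where $N$ is chosen so that all entries of $M^N$ are strictly positive; but $M^N x$ must have strictly positive entries since $x$ has at least one, a contradiction. A fixed point $v \in \Delta$ of $T$ satisfies $Mv = \lambda v$ for some $\lambda > 0$, and $\lambda^N v = M^N v$ combined with positivity of $M^N$ shows that $v$ itself has strictly positive entries.

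Second, to identify $\lambda$ with the spectral radius, let $\mu \in \C$ be any eigenvalue of $M$ and $w \in \C^n$ an associated eigenvector. Entrywise, $|\mu|^k |w| \leq M^k |w|$ for every $k \geq 0$. Applying the previous step to $\transp{M}$ (which is also primitive) produces a strictly positive row vector $u$ with $uM = \lambda u$. Pairing $u$ with the above inequality gives $|\mu|^k (u \cdot |w|) \leq \lambda^k (u \cdot |w|)$, and since $u \cdot |w| > 0$ we conclude $|\mu| \leq \lambda$; combined with the trivial lower bound $\rho(M) \geq \lambda$ coming from $Mv = \lambda v$, this gives $\lambda = \rho(M)$.

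Third, for simplicity, I would first establish geometric simplicity. If $w$ were a real eigenvector for $\lambda$ not collinear with $v$, then up to replacing $w$ by $-w$ we may find a largest $t > 0$ with $v - tw \geq 0$, at which value $v - tw$ has at least one vanishing coordinate; but then $\lambda^N (v - tw) = M^N(v - tw)$ would be entrywise strictly positive, a contradiction. A complex eigenvector is treated by considering its real and imaginary parts separately. For algebraic simplicity, suppose $(M - \lambda I)^2 w = 0$ with $(M - \lambda I)w \neq 0$; by geometric simplicity, $(M - \lambda I)w = cv$ for some $c \neq 0$, but pairing with $u$ yields $0 = u(M - \lambda I)w = c(u \cdot v)$, contradicting $u \cdot v > 0$.

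The only substantive obstacle is step~1, which relies on Brouwer's fixed point theorem (one could alternatively argue via a Collatz--Wielandt variational formula); the remaining steps then follow from careful bookkeeping, exploiting the strict positivity of $M^N$ and of the left Perron eigenvector.
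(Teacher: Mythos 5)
The paper does not prove this statement: it is quoted as the classical Perron--Frobenius theorem and used as a black box, so there is no internal proof to compare yours against. Your argument is a correct rendition of the standard proof for primitive matrices: Brouwer's fixed point theorem (or equivalently a Collatz--Wielandt argument) on the simplex gives a positive eigenvalue $\lambda$ with strictly positive eigenvector $v$, strict positivity of $M^N$ upgrades nonnegativity to positivity, the left Perron eigenvector $u$ of $\transp{M}$ dominates all other eigenvalues in modulus, and the boundary/pairing arguments give geometric and then algebraic simplicity. One small point you should make explicit: applying step~1 to $\transp{M}$ only yields a strictly positive row vector $u$ with $uM=\lambda' u$ for \emph{some} $\lambda'>0$; that $\lambda'=\lambda$ is not automatic but follows in one line from $\lambda'\,(u\cdot v)=uMv=\lambda\,(u\cdot v)$ together with $u\cdot v>0$. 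With that line added, every step is sound, and the same pairing $u\cdot v>0$ is exactly what powers your algebraic-simplicity argument, so the proof is self-contained beyond Brouwer's theorem.
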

	
	We call this maximal eigenvalue the \defi{Perron eigenvalue} of $M$, and we call the associated eigenvector a \defi{Perron eigenvector}.
	
	We say that $v \in \C^n$ is a \defi{generalized eigenvector} of $M$ for an eigenvalue $\beta$ if $v$ is a non-zero vector in the \defi{generalized eigenspace} $\ker((M - \beta I)^k)$ where $k \geq 1$ is the algebraic multiplicity of $\beta$.
	
	We extend the notion of projector to linear maps that are not endomorphisms. We say that a linear map $V : \R^n \to \R^d$ is a \defi{projection} along a subspace $F$ of $\R^n$ if $\ker(V) = F$ and $d+\dim(F) = n$. Such a map is onto.
	
	We have the following lemma.
	\begin{lemme} \label{lem:proj}
		Let $V : \R^n \to \R^d$ be a projection along $F$, and let $M : \R^n \to \R^n$ be a linear map such that $M(F)=F$.
		Then there exists a unique linear map $N : \R^d \to \R^d$ such that $NV = VM$, and we have $\det(N) = \det(M')$,
		where $M' : \R^n/F \to \R^n/F$ is the quotient map.
	\end{lemme}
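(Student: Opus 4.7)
The plan is to use the universal property of the quotient. Since $V : \R^n \to \R^d$ has kernel $F$ and its image has dimension $d = n - \dim(F)$, $V$ is surjective and factors as $V = \bar V \circ \pi$, where $\pi : \R^n \to \R^n/F$ is the canonical projection and $\bar V : \R^n/F \to \R^d$ is a linear isomorphism. Similarly, since $M(F) = F$, the map $M$ descends to a well-defined endomorphism $M' : \R^n/F \to \R^n/F$ characterized by $M' \circ \pi = \pi \circ M$.

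For uniqueness of $N$, observe that if $N_1 V = N_2 V = VM$, then $N_1$ and $N_2$ agree on the image of $V$; since $V$ is surjective, $N_1 = N_2$. For existence, I would simply set $N = \bar V \circ M' \circ \bar V^{-1}$. Then
\[
    NV = \bar V \circ M' \circ \bar V^{-1} \circ \bar V \circ \pi = \bar V \circ M' \circ \pi = \bar V \circ \pi \circ M = VM,
\]
so $N$ satisfies the required relation. (Alternatively, one can define $N$ directly: for $y \in \R^d$, pick any $x$ with $Vx = y$ and set $Ny = VMx$; the hypothesis $M(F) = F$ ensures this is independent of the choice of $x$.)

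For the determinant identity, the equality $N = \bar V \circ M' \circ \bar V^{-1}$ exhibits $N$ and $M'$ as conjugate endomorphisms of $d$-dimensional vector spaces, so $\det(N) = \det(M')$.

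There is no real obstacle here; the only point to be careful about is verifying that $M(F) = F$ (and not merely $M(F) \subseteq F$) is what is used so that $M'$ is a genuine endomorphism of the quotient (equality is automatic from inclusion when $M|_F$ is injective, but in general we want $M'$ to be defined as a map $\R^n/F \to \R^n/F$, which requires only $M(F) \subseteq F$; the hypothesis as stated is more than enough).
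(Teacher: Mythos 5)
Your proposal is correct and follows essentially the same route as the paper: factor $V$ through the quotient $\R^n/F$ to get an isomorphism, set $N = \bar V M' \bar V^{-1}$, deduce $\det(N)=\det(M')$ by conjugacy, and get uniqueness from surjectivity of $V$. Your write-up just spells out the verification $NV=VM$ and the well-definedness of $M'$ a bit more explicitly than the paper does.
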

	
	\begin{proof}
		The map $V$ gives an isomorphism $V' : \R^n/F \to \R^d$, and the map $M$ gives a map $M' : \R^n/F \to \R^n/F$.
		Then, we can define $N$ by $N = V'M'(V')^{-1}$, and it satisfies $NV = VM$ and $\det(N) = \det(M')$.
		The unicity comes from the fact that $V$ is onto: if $N$ and $N'$ are two such maps, then $(N-N')V = 0$, so $N = N'$.
	\end{proof}
	
	We say that a subspace is \defi{rational} if it admits a basis with coefficients in $\Q$.
	We say that a vector $v$ has a \defi{totally irrational direction} if the coefficients of $v$ are linearly independent over $\Q$. A projection is \defi{totally irrational} if it is a projection along a vector with a totally irrational direction.
	
\subsection{Substitutions} \label{sec:subst}

	We say that a morphism $\sigma : A^* \to A^*$ is \defi{non-erasing} if for every $a \in A$, $\abs{\sigma(a)} \geq 1$.
	A \defi{substitution} over an alphabet $A$ is a non-erasing morphism of $A^*$.
	The \defi{incidence matrix} of a substitution $\sigma$ is the matrix $M_\sigma = (\abs{\sigma(a)}_b)_{(b,a) \in A^2}$.
	For any finite word $u \in A^*$, we have the relation $\ab(\sigma(u)) = M_\sigma \ab(u)$.
	We say that a substitution is \defi{primitive}, \defi{irreducible}, or any property that has a meaning for a matrix, if its incidence matrix has the corresponding property.
	
	The \defi{subshift} of a primitive substitution $\sigma$ is the dynamical system $(\Omega_\sigma, S)$, where $\Omega_\sigma$ is the smallest non-empty compact subset of $A^\Z$ invariant under the substitution and by the shift map. We denote it $\Omega$ when there is no ambiguity.
	It can be shown that for every primitive substitution $\sigma$, the subshift $(\Omega_\sigma, S)$ is minimal and uniquely ergodic (see Subsection~V.2 and Theorem~V.13 in ~\cite{queffelec}).
	We say that the substitution $\sigma$ is \defi{aperiodic} if every orbit in the subshift is infinite.
	Notice that if a substitution is primitive and pseudo-unimodular, then it is aperiodic since the Perron eigenvalue is irrational.
	
	For every finite words $v,w \in A^*$, we denote by $[v \cdot w]$ the \defi{cylinder} of $\Omega$
	\[
		[v \cdot w] = \{ u \in \Omega \mid u_{[0,\abs{w})} = w \text{ and } u_{[-\abs{v}, 0)} = v \}.
	\]
	And we denote $[w] = \{ u \in \Omega \mid u_{[0,\abs{w})} = w\}$.
	
	A \defi{fixed point} of a substitution $\sigma$ is a bi-infinite word $u \in A^\Z$ such that $\sigma(u) = u$.
	A \defi{periodic point} of $\sigma$ is a bi-infinite word $u \in A^\Z$ such that there exists $n \geq 1$ such that $u$ is a fixed point of $\sigma^n$.
	We say that a fixed point or a periodic point is \defi{admissible} if it is an element of the subshift $\Omega_\sigma$.
	Every primitive substitution has an admissible periodic point.
	
	We say that a substitution $\sigma$ is \defi{left-proper} (respectively \defi{right-proper}) if there exists a letter $a_0 \in A$ such that for every $a \in A$, $\sigma(a)$ starts (respectively ends) with letter $a_0$.
	The substitution is \defi{proper} if it is left-proper and right-proper.
	
	The following theorem is due to F. Durand and S. Petite (see Corollary~9 in~\cite{durand-petite}).

\begin{thm}[Durand-Petite] \label{thm:dp}
	Let $\sigma$ be a primitive substitution.
	Then there exists a proper primitive substitution $\xi$ such that
	\begin{itemize}
		\item $(\Omega_\sigma, S)$ is conjugate to $(\Omega_\xi, S)$,
		\item there exists $l \geq 1$ such that the substitution matrices $M_\sigma^l$ and $M_\xi$ have the same eigenvalues, except perhaps $0$ and $1$.
	\end{itemize}
	Moreover, the proof is effective.
\end{thm}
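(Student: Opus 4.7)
The plan is to build $\xi$ by a \emph{block-recoding} (return word) construction: identify a short pattern that every long $\sigma^l$-image contains, and recode $\Omega_\sigma$ by cutting bi-infinite words at occurrences of this pattern. Two classical routes carry this out, namely Durand's return-word construction and the reordering of a Bratteli--Vershik representation of $(\Omega_\sigma, S)$ as properly ordered; I will sketch the first.

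First I would pass to a power $\sigma^l$ large enough so that (i) $\sigma^l$ admits an admissible fixed point $u \in \Omega_\sigma$ with $u_{-1} = b_0$ and $u_0 = a_0$ for specified letters $a_0, b_0 \in A$, and (ii) the pattern $b_0 a_0$ appears in $\sigma^l(a)$ for every $a \in A$. Both conditions follow from primitivity for $l$ large enough. Let $B$ be the finite set of words occurring in $\Omega_\sigma$ that begin with $a_0$, end with $b_0$, and contain no interior occurrence of $b_0 a_0$. Every $v \in \Omega_\sigma$ then factors uniquely as a bi-infinite concatenation of elements of $B$, yielding a shift-commuting code $\psi : \Omega_\sigma \to \Omega_\xi \subseteq B^\Z$. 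Define $\xi : B \to B^*$ by applying $\sigma^l$ to $w$ and cutting the result at its occurrences of $b_0 a_0$; because the first $B$-block in the factorisation of $\sigma^l(w)$ depends only on $a_0$, and the last only on $b_0$, the substitution $\xi$ is proper. Primitivity of $\xi$ is inherited from $\sigma^l$, and $\psi$ is a conjugacy because the pattern $b_0 a_0$ occurs with bounded gaps in every element of $\Omega_\sigma$.

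For the incidence matrices, the abelianisation $\ab : \Z^B \to \Z^A$ sending each $w \in B$ to its letter-count vector in $\Z^A$ intertwines the two matrices: $M_\sigma^l \circ \ab = \ab \circ M_\xi$. Choosing a basis of $\Z^B$ adapted to $\ker(\ab)$ puts $M_\xi$ in block-triangular form whose quotient block is conjugate to $M_\sigma^l$ restricted to $\im(\ab) \otimes \R$, so the spectrum of $M_\sigma^l$ on $\im(\ab)$ embeds into that of $M_\xi$. The main obstacle is the converse: showing that the extra eigenvalues of $M_\xi$, coming from the action on $\ker(\ab) \otimes \R$, are all $0$ or $1$. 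I would describe $\ker(\ab)$ as spanned by ``swap differences'' $[w]-[w']$ between $B$-blocks with the same abelianisation, observe that $M_\xi$ permutes these swap classes up to nilpotent corrections, and conclude that the resulting eigenvalues are roots of unity; a suitable further power collapses these to $1$, which matches the form of the claimed statement. Effectivity of the whole construction is clear since every step (choice of $l$, $a_0$, $b_0$; enumeration of $B$; definition of $\xi$; change of basis) is algorithmic.
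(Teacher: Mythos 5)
Your recoding construction is essentially the construction that this statement rests on (the paper does not prove it; it invokes Corollary~9 of Durand--Petite and only recalls the return-word proprification algorithmically in the section on proprification), and that half of your proposal is sound in outline: cutting at the seam pattern $b_0a_0$ of a fixed point of $\sigma^l$ gives a proper, primitive induced substitution $\xi$ and a sliding-block conjugacy. The problem is the second bullet of the statement, which is the actual mathematical content of Durand--Petite's result, and there your argument has a genuine gap. Writing $\ab : \Z^B \to \Z^A$ for the block-abelianisation, the intertwining $M_\sigma^l \circ \ab = \ab \circ M_\xi$ splits the comparison into two halves: (i) the action of $M_\xi$ on $\ker(\ab)\otimes\R$, and (ii) the action of $M_\sigma^l$ on the quotient $\R^A/(\im(\ab)\otimes\R)$. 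For (i) you assert that $M_\xi$ ``permutes swap classes up to nilpotent corrections'', hence has only roots of unity there; no justification is given, and none is obvious: an integer matrix leaving a sublattice invariant can act on it with arbitrary spectrum, so some specific combinatorial property of return words must be identified and used. This is precisely where the work lies in Durand--Petite, and your sketch does not supply it.

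Half (ii) is missing entirely, and it cannot be skipped: $\ab$ need not be rationally surjective, so $M_\sigma^l$ may have eigenvalues invisible on $\im(\ab)$, and the theorem claims these too are only $0$ or $1$. This is not a hypothetical worry — in the paper's own Example (Timo Jolivet's $\sigma : 1 \mapsto 213,\ 2 \mapsto 4,\ 3 \mapsto 5,\ 4 \mapsto 1,\ 5 \mapsto 21$) the return substitution of $\sigma^3$ has only three letters while $A$ has five, so $\im(\ab)\otimes\R$ is a proper subspace of $\R^A$ and the two leftover eigenvalues of $M_\sigma^3$ must be proved to be $0$ or $1$. Finally, your closing move ``pass to a further power to turn roots of unity into $1$'' is compatible with the statement's form (it replaces $l$ by a multiple and $\xi$ by a power, which stays proper and primitive), but it is cosmetic: it only helps after the root-of-unity claim on both the kernel and the cokernel has been established, which your proposal does not do. As it stands, the spectral comparison — the part of the theorem this paper actually needs and attributes to Durand--Petite — remains unproved.
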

	
	We call \defi{proprification algorithm} an algorithm that inputs a primitive substitution $\sigma$ and that outputs a proper substitution $\xi$ as in this theorem. We say that we \defi{proprify} a substitution $\sigma$ if we apply to it such algorithm, and the output substitution is called \defi{proprification} of $\sigma$.
    See Subsection~\ref{ss:prop} for more details about the proprification algorithm of F. Durand and S. Petite.
	
	We say that a substitution is \defi{weakly irreducible Pisot} if it is primitive, the Perron eigenvalue $\beta$ is a unit Pisot number, and every other eigenvalue of its incidence matrix is either a conjugate of $\beta$, a root of unity, or zero.
	Note that the class of primitive pseudo-unimodular substitutions is strictly larger than this (e.g. Example~\ref{ex:fmn1}, and irreducible Salem substitutions).

\subsection{Prefix-suffix automaton and Dumont-Thomas numeration} \label{ss:dt}

	Let $\sigma$ be a substitution over an alphabet $A$.
	The \defi{prefix-suffix automaton} of $\sigma$ is an automaton whose states are the set $A$, and whose transitions
	are $a \xrightarrow{p,s} b$ for every letters $a,b \in A$ and words $p,s \in A^*$ such that $\sigma(a) = pbs$.
	In all this article, we denote by $a \xrightarrow{p,s} b$ if and only if $\sigma(a) = pbs$, if there is no ambiguity on what is the substitution $\sigma$.
	
	The \defi{abelianized prefix automaton} is the same automaton where we replace transitions
	$a \xrightarrow{p,s} b$ by $a \xrightarrow{t} b$, where $t = \ab(p)$.
	
	For the subshift $\Omega_\sigma$ and for every letter $a \in A$, we have the relation
	\[
		[a] = \bigcup_{b \xrightarrow{p,s} a} S^{\abs{p}} \sigma([b]).
	\]
	And we have a similar relation for worms: for every $u \in \Omega$, we have
	\[
		W_a(\sigma(u)) = \bigcup_{b \xrightarrow{t} a} M_\sigma W_b(u) + t.
	\]
	
	Any word $u \in \Omega$ can be written uniquely in the form
	\[
		u = \sigma^n(v_n) \sigma^{n-1}(p_{n-1}) ... \sigma(p_1) p_0 \cdot b s_0 \sigma(s_1) ... \sigma^{n-1}(s_{n-1}) \sigma^n(w_n),
	\]
	for $v_n$ a left-infinite word, $w_n$ a right-infinite word, and such that we have a path
	$
	 \xrightarrow{p_{n-1}, s_{n-1}} \dots \xrightarrow{p_0, s_0} b
	$
	in the prefix-suffix automaton.
	We call \defi{sequence of prefixes} the sequence $(p_n)_{n \in \N} = (p_n(u))_{n \in \N}$ associated to $u$.
	The \defi{sequence of abelianized prefixes} of $u \in \Omega$ is defined by $t_n(u) = \ab(p_n(u))$.
	
	Hence, to any word $u \in \Omega$, we associate a unique left-infinite path in the prefix-suffix automaton or in the abelianized prefix automaton (see Proposition~3.2 in~\cite{CS}).
	Notice that such a path can be considered as a path in a Bratelli diagram of $\Omega$.
	
\subsection{Eigenvalues of a subshift}
 
 We denote by $\S^1$ the set of complex numbers of modulus one.
 We say that $\eta \in \S^1$ is an \defi{eigenvalue} of a subshift $(\Omega, S)$ if there exists a continuous function $f : \Omega \to \S^1$ called \defi{eigenfunction} such that $f \circ S = \eta f$.
 Notice that for primitive substitutions, if we allow the eigenfunctions to be only measurable rather than continuous, and the image to be $\C$ rather than $\S^1$, it doesn't give more eigenvalues (see~\cite{host}, Theorem~1.4).
 
 We say that a subshift $(\Omega, S)$ is an \defi{extension} of a translation on a torus $\T^d = \R^d/\Z^d$, if
 there exists a continuous map $f : \Omega \to \T^d$ and $\alpha \in \T^d$ such that
 $f \circ S = f + \alpha$.
 We say moreover that this extension is \defi{finite} if the cardinality of $f^{-1}(x)$ is finite for almost every $x \in \T^d$, for the \defi{Lebesgue measure} that we denote $\lambda$.
 
 Notice that if $1$, $\alpha_1$, ..., $\alpha_d \in \R$ are linearly independent over $\Q$, then the translation by $\alpha = (\alpha_1, ..., \alpha_d)$ on the torus $\T^d$ is minimal and uniquely ergodic.
 It implies that an eigenfunction $f$ is necessarily almost everywhere constant-to-one, but with a constant that can be infinite if the extension is not finite.
 
 We say that a subshift $(\Omega, S)$ is \defi{weakly mixing} if its only eigenvalue is $1$, and if this eigenvalue $1$ is simple. Notice that if $\sigma$ is a primitive substitution, then eigenvalues of $(\Omega_\sigma, S)$ are simple since it is uniquely ergodic.

\subsection{Domain exchange}

We call \defi{domain exchange} a subset $R \subseteq \R^d$, with a map $E : R \to R$ almost everywhere defined for the Lebesgue measure $\lambda$ such that there exists a finite number of subsets $R_a$, $a \in A$, such that
\begin{itemize}
	\item $R = \bigcup_{a \in A} R_a$, and the union is Lebesgue disjoint,
	\item each $R_a$ is the closure of its interior,
	\item the boundary of each $R_a$ has zero Lebesgue measure,
	\item for every $a \in A$, $E \restr{int(R_a)}$ is a translation,
	\item $\lambda(R) = \lambda(E(R))$.
\end{itemize}

We say that a map $f : R \to \R^d$, where $R \subset \R^d$, is a \defi{translation by pieces}
if there exists a finite measurable partition $R = \bigcup_{i \in I} R_i$ such that for every $i \in I$ the restriction $f \restr{E_i}$ is a translation.

Notice that the map $E$ associated with a domain exchange is a translation by pieces.
And note that a translation by pieces is finite-to-one.

\section{Generalized Rauzy fractals} \label{sec:gr}

In this section, we generalize the usual notion of Rauzy fractal.
As we will see, the construction depends on the choice of a projection map. For subshifts associated to irreducible substitutions, the choice of the projection is obvious, but for primitive substitutions several choices can be made.
One choice gives a domain exchange, and another one permits to get a translation on a torus as a factor.

\begin{prop} \label{prop-rauzy-fractal}
	Let $(\Omega, S)$ be a minimal aperiodic subshift over an alphabet $A$, and let $V : \R^A \to \R^d$ be a linear map.
	Assume that there exists $u \in \Omega$ such that $V W(u)$ is bounded.
	Then, the map
	\[
		\begin{array}{rcl}
			\phi : \ane(u) &\to& \R^d \\
					S^n u & \mapsto & V \ab(u_{[0,n)})
		\end{array}
	\]
	can be extended by continuity to the whole subshift $\Omega$.
\end{prop}

\begin{proof}
    This proposition is a generalization of Lemma~8.2.5 in~\cite{AM}, with a more general projection map $V$, and with bi-infinite words rather than right-infinite words.
    But the same proof works.
\end{proof}

We call the image $\phi(\Omega)$ a \defi{Rauzy fractal} of $\Omega$.
We denote this map $\phi_{u,V,\Omega}$, and we will omit $u$, $V$, or $\Omega$ when there is no ambiguity.

\begin{rem}
	With this definition, a Rauzy fractal is always compact.
	It is possible to give a more general definition that allows unbounded Rauzy fractal as in~\cite{melo}.
\end{rem}

The following proposition gives properties of the map $\phi$.

\begin{prop} \label{props-rauzy-fractal}
	Under the hypothesis of Proposition~\ref{prop-rauzy-fractal}, we have
	\begin{itemize}
		\item The Rauzy fractal $R = \phi(\Omega)$ is the closure of $VW(u)$.
		\item For every $v \in \Omega$, $\phi(Sv) = \phi(v) + V\ab(v_0)$.
		\item For every $v \in \Omega$, $\phi_v$ is well-defined and $\phi_u - \phi_v$ is constant.
		\item If $v$ and $w$ are two bi-infinite words of $\Omega$ with the same left-infinite or right-infinite part, then $\phi(v) = \phi(w)$.
	\end{itemize}
\end{prop}

\begin{proof}
	By continuity of $\phi$, $R$ is the closure of $\phi(\ane(u))$.
	And by construction, $\phi(\ane(u)) = VW(u)$.
	Thus $R$ is the closure of $VW(u)$.

	By construction, we have for every $n \in \Z$, $\phi(S^{n+1}u) - \phi(S^nu) = V\ab(u_n)$.
	Since $\Omega$ is minimal the orbit of $u$ is dense in $\Omega$, and since $\phi$ is continuous, we get that
	for every $v \in \Omega$, $\phi(Sv) = \phi(v) + V\ab(v_0)$.

	Let $v \in \Omega$. Then, the set
	\[
		VW(v) = \{V \ab(v_{[0,n)}) \mid n \in \Z\} = \{\phi_u(S^n v) - \phi_u(v) \mid n \in \Z\}
	\]
	is bounded, so $\phi_v$ is well-defined.
	And for every $n \in \Z$, we have $\phi_u(S^nv) - \phi_v(S^nv) = \phi_u(v) + V\ab(v_{[0,n)}) - V\ab(v_{[0,n)}) = \phi_u(v)$.
	By density of the orbit of $v$ and by continuity, we get that $\phi_u - \phi_v$ is constant to $\phi_u(v)$.

	If $u$ and $v$ are two elements of $\Omega$ having their right-infinite parts in common, then the proof of Proposition~\ref{prop-rauzy-fractal} shows that $\phi(u) = \phi(v)$.
	If it is the left-infinite parts that $u$ and $v$ have in common, then we come back to the previous case by symmetry, looking at the mirror of the words.
\end{proof}

The following proposition permits us to show that the Rauzy fractal is well-defined for the subshift of a substitution, as soon as the projection and the incidence matrix are compatible.

\begin{lemme} \label{lem:cv}
	Let $\sigma$ be a primitive and aperiodic substitution over an alphabet $A$, and let $u$ be an admissible fixed point of $\sigma$.
	If $V : \R^A \to \R^d$ is a linear map such that $\sum_{n \in \N} \norm{V M_\sigma^n}$ converges,
	then the hypothesis of Proposition~\ref{prop-rauzy-fractal} is satisfied and $\phi_{u, V, \Omega_\sigma} : \Omega_\sigma \to \R^d$ is well-defined.
	Moreover, for every $v \in \Omega$ we have the equality
	\[
		\phi_{u, V, \Omega}(v) = \sum_{n=0}^\infty V M^n t_n(v),
	\]
	where $t_n(v) = \ab(p_n(v))$ is defined in subsection~\ref{ss:dt}.
\end{lemme}

\begin{proof}
	The subshift $(\Omega_\sigma,S)$ is minimal since $\sigma$ is primitive.
	We have $\phi_{V}(\ane(u)) = \{ V \ab(u_{[0,n)}) \mid n \in \Z\}$. The positive part is described by
	\[
		\{ V \ab(u_{[0,n)}) \mid n \in \N\} 
							= \{ \sum_{n=0}^N VM^n t_n \mid u_0 \xrightarrow{t_N} a_{N} \dots a_1 \xrightarrow{t_0} a_0 , N \in \N \}
	\]
	Since $t_n$ are in a finite set (abelianizations of prefixes of $\sigma(a)$, $a \in A$), and since $\sum_{n \in \N} \norm{VM^n}$ converges, we get that the set is bounded.
	The negative part can be described in the same way and is also bounded.
	Thus, $VW(v)$ is bounded, so $\phi_{V} : \Omega_\sigma \to \R^d$ is a well-defined continuous map.
	
	To prove the last equality, note that the sum $f = \sum_{n=0}^\infty V M^n t_n$ defines a continuous map $f: \Omega \to \R^d$ since every $t_n : \Omega \to \R^A$ is continuous and since the series is normally convergent.
	Hence, it is enough to check the equality on the dense subset $\{S^k u \mid k \in \N\}$.
	Let $k \in \N$.
	There exists $N \in \N$ such that $t_n(S^ku) = 0$ for every $n \geq N$.
	Then, we have
	\begin{eqnarray*}
		f(S^k u) &=& \sum_{n=0}^{N-1} V M^n t_n(S^k u) \\
					&=& V \ab(\sigma^{N-1}(p_{N-1}(S^k u)) \dots \sigma(p_1(S^k u))p_0(S^k u)) \\
					&=& V \ab(u_{[0, k)}) \\
					&=& \phi(S^k u).
	\end{eqnarray*}
\end{proof}

\section{Usual Rauzy fractal} \label{sec:r}

The previous section defined Rauzy fractal for general subshifts and for various choices of projections.
In this section, we focus on subshifts associated with primitive substitutions whose Perron eigenvalue of the incidence matrix is a Pisot unit, and we consider a particular choice of projection that permits to have many nice properties.
More precisely, we assume the following.

\begin{hypo} \label{hyp:rauzy}
	\text{ }
	\begin{itemize}
		\item $\sigma$ is a primitive substitution over an alphabet $A$ such that the Perron eigenvalue of $M_\sigma$ is a unit Pisot number $\beta$ of degree $d+1$,
		\item $u \in \Omega_\sigma$ is an admissible fixed point of $\sigma$,
		\item $V : \R^A \to \R^d$ is a projection along $\ker((M-\beta I)P(M))$, where $P \in \Z[X]$ is such that the characteristic polynomial of $M_\sigma$ has the form $\pi_\beta P$, where $\pi_\beta$ is the minimal polynomial of $\beta$. In other words, $V$ is a projection along every generalized eigenspace except for the conjugates of modulus less than $1$ of the Perron eigenvalue $\beta$,
		\item $\phi = \phi_{u, V, \sigma}$, $R = \phi(\Omega_\sigma)$ and for every $a \in A$, $R_a = \phi([a])$.
	\end{itemize}
\end{hypo}

\begin{define}
	Under Hypothesis~\ref{hyp:rauzy}, we say that $R$ is an \defi{usual Rauzy fractal} of $\sigma$.
\end{define}

Such usual Rauzy fractals have a lot of nice properties.

\begin{props} \label{props:usual:rauzy}
	Under Hypothesis~\ref{hyp:rauzy}, we have the following properties
	\begin{itemize}
		\item There exists a unique invertible endomorphism $N$ of $\R^d$ such that $NV = V M_\sigma$, and $\abs{\det(N)} = \frac{1}{\beta}$,
		\item the union $R_a = \bigcup_{b \xrightarrow{t} a} N R_b + Vt$ is disjoint in Lebesgue measure,
		\item $V$ restricted to $W(v)$ is one-to-one, for every $v \in A^\Z$,
		\item each $R_a$ is the closure of its interior,
		\item each $R_a$ has a boundary of zero Lebesgue measure.
	\end{itemize}
\end{props}

In the following of this section, we prove these properties.

The map $N$ is given by Lemma~\ref{lem:proj}.
The determinant of $N$ is equal to the determinant of the quotient map $M: \R^A/F \to \R^A/F$, where $F = \ker((M-\beta I)P(M))$.
The eigenvalues of this quotient map are all the roots of $\pi_\beta$ but $\beta$. 
The hypothesis that the Perron eigenvalue is a Pisot unit gives us that the product of the roots of $\pi_\beta$ is $\pm 1$, thus we get
$\det(N) = \frac{\pm 1}{\beta}$.

Now, we give several lemmas that permits to prove the other properties.

\begin{lemme} \label{lem:gIFS}
	We assume Hypothesis~\ref{hyp:rauzy}.
	Then, the pieces $R_a$, $a \in A$, of the Rauzy fractal are the smallest non-empty compact solutions of the equations
	\[
		N^{-1} R_a = \bigcup R_b + \D_{a,b}, \quad a \in A
	\]
	where $\D_{a,b} = \{N^{-1} Vt \mid b \xrightarrow{t} a \}$.
\end{lemme}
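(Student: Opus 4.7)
The plan is to first verify that the tuple $(R_a)_{a\in A}$ solves the system, and then to show by a graph-directed Hutchinson argument that it is the unique, hence smallest, non-empty compact solution. The two key inputs will be a semiconjugacy $\phi\circ\sigma = N\phi$ for existence, and the fact that $N$ is a strict contraction (by the Pisot hypothesis) for uniqueness.

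For existence, I would first establish $\phi\circ\sigma = N\phi$ on the dense orbit of the admissible fixed point $u$: writing $\sigma(S^k u) = S^{|\sigma(u_{[0,k)})|}u$ one computes
\[
\phi(\sigma(S^k u)) = V\ab(\sigma(u_{[0,k)})) = VM_\sigma\ab(u_{[0,k)}) = NV\ab(u_{[0,k)}) = N\phi(S^k u),
\]
using $NV = VM_\sigma$ from the first bullet of Properties~\ref{props:usual:rauzy}, and extend by continuity to all of $\Omega_\sigma$. I would then apply $\phi$ to the cylinder decomposition $[a] = \bigcup_{b\xrightarrow{p,s} a} S^{|p|}\sigma([b])$ of subsection~\ref{ss:dt}: for $w\in[b]$ the first $|p|$ letters of $\sigma(w)$ are exactly $p$, so the translation rule $\phi(S^n v) = \phi(v) + V\ab(v_{[0,n)})$ from Properties~\ref{props-rauzy-fractal} yields $\phi(S^{|p|}\sigma(w)) = N\phi(w) + Vt$ with $t = \ab(p)$. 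Taking unions gives $R_a = \bigcup_{b\xrightarrow{t} a}(NR_b + Vt)$, which is the stated system after applying $N^{-1}$.

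For uniqueness, I would first identify the spectrum of $N$: by Lemma~\ref{lem:proj}, $N$ is conjugate to the map induced by $M_\sigma$ on $\R^A/\ker V$, whose eigenvalues are the roots of $\pi_\beta$ different from $\beta$, i.e.\ the Galois conjugates of the Pisot number $\beta$. All have modulus strictly less than $1$, so in a suitable norm on $\R^d$ one has $\|N\|_{\mathrm{op}}\leq \kappa < 1$. Equipping the set $\mathcal{K}(\R^d)^A$ of $A$-tuples of non-empty compact subsets of $\R^d$ with the max of componentwise Hausdorff distances, the graph-directed operator $\Phi(K)_a = \bigcup_{b\xrightarrow{t} a}(NK_b + Vt)$ becomes a $\kappa$-contraction on this complete metric space, by the standard Lipschitz properties of unions and affine maps for the Hausdorff distance. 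Banach's fixed point theorem then gives a unique non-empty compact fixed point, which by the existence step must be $(R_a)_a$; any other non-empty compact solution coincides with it, so the $R_a$ are in particular the smallest.

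The only genuinely delicate step is the semiconjugacy $\phi\circ\sigma = N\phi$ together with the correct identification of $N$ as the map induced on the quotient $\R^A/\ker V$; the remainder is a classical contraction-based rigidity argument for graph-directed iterated function systems, powered by the Pisot-contractivity of $N$.
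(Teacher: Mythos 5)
Your proof is correct and takes essentially the same route as the paper: you derive the graph-directed equation $R_a=\bigcup_{b\xrightarrow{t}a} N R_b + Vt$ from the self-similarity induced by $\sigma$ (the paper gets it from the worm identity $W_a(u)=\bigcup_{b\xrightarrow{t}a} MW_b(u)+t$ for the fixed point, which is just the abelianized form of your cylinder/semiconjugacy computation), and then you conclude by uniqueness of the attractor of the graph-directed IFS, which the paper obtains by citing Mauldin--Williams~\cite{MW} while you reprove it via the Banach fixed point theorem using that the eigenvalues of $N$ are the conjugates of the Pisot number $\beta$, exactly as noted in the paper. The only difference is that your write-up makes the contraction argument explicit instead of invoking the reference.
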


\begin{proof}	
	Since $u$ is a fixed point, we have the union
	\[
		W_a(u) = \bigcup_{b \xrightarrow{t} a} M W_b(u) + t.
	\]
	Then, applying $V$ to both sides and using $NV = VM$, we get
	\[
		R_a = \bigcup_{b \xrightarrow{t} a} N R_b + Vt.
	\]
	Now, assume that $R_a'$, $a \in A$, are non-empty compact sets satisfying such equations.
	Since we have $\norm{N} < 1$, iterating such equations gives for every $a \in A$
	\[
	    \{ \sum_{n=0}^\infty N^n V t_n \mid ... \xrightarrow{t_n} ... \xrightarrow{t_0} a \} \subset R_a'.
	\]
	Thus, by Lemma~\ref{lem:cv} we have $R_a \subseteq R_a'$. So $R_a$, $a \in A$, are indeed the smallest non-empty compact subsets satisfying the equations.
\end{proof}

If we iterate the equations of this lemma, we get
\[
	N^{-n} R_a = \bigcup R_b + \D_{a,b}^n, \quad a \in A,
\]
where $\D_{a,b}^n = \{ \sum_{i=0}^{n-1} N^{i-n}Vt_i \mid b \xrightarrow{t_{n-1}} ... \xrightarrow{t_0} a \}$.

\begin{lemme} \label{lem:sep}
	There exists $\epsilon > 0$ such that for every $a,b \in A$ and every $n \in \N$, the set $\D_{a,b}^n$ is $\epsilon$-separated: $\forall x \neq y \in \D_{a,b}^n$, $\norm{x-y} > \epsilon$.
\end{lemme}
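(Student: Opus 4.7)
The plan is to reduce the separation claim to a statement about Galois conjugates of a single algebraic number in $\Q(\beta)$ and then to exploit the Pisot \emph{unit} hypothesis to absorb the dependence on $n$. Using the intertwining $NV=VM$, I would first rewrite any $x \in \D_{a,b}^n$ as $x = N^{-n} V z_x$ where $z_x := \sum_{i=0}^{n-1} M^i t_i \in \Z^A$ is the abelianization of the prefix of $\sigma^n(b)$ ending just before the prescribed occurrence of $a$. Since $|\sigma^n(b)| \leq C \beta^n$, we have $\|z_x\|_\infty \leq C\beta^n$, so for $x \neq y$ in $\D_{a,b}^n$ the difference reads $x - y = N^{-n} V z$ with $z := z_x - z_y \in \Z^A$ and $\|z\|_\infty \leq 2C\beta^n$. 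Because $\pi_\beta$ and $P$ are coprime in $\Q[X]$, Bezout provides $a,b \in \Q[X]$ with $a\pi_\beta + bP = 1$; then $\Pi := b(M)P(M)$ is a bounded projection of $\R^A$ onto $\ker\pi_\beta(M) = F_\beta \oplus F_c$ along $F_P := \ker P(M)$, with rational entries. Since $V$ kills both $F_P$ and the Perron line $F_\beta$, $Vz = V(\Pi z)$, so $\Pi z = 0$ would force $x = y$; assume $\Pi z \neq 0$.

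Next, I would transport the problem into $\Q(\beta)$. The subspace $F_\beta\oplus F_c$ is rational of dimension $d+1$ and $M$ acts on it cyclically with irreducible minimal polynomial $\pi_\beta$, so choosing a rational cyclic vector identifies it $\Q$-linearly with $\Q(\beta)$ in such a way that $M$ becomes multiplication by $\beta$. Under this identification $\Pi z$ corresponds to some $\tilde\alpha \in \tfrac{1}{D}\Z[\beta] \setminus\{0\}$ for a bounded denominator $D$ depending only on $\sigma$, and the complex eigen-decomposition $\Pi z = \sum_{j=1}^{d+1}\mu_j v_j$ in an eigenbasis $(v_j)$ of $M\restr{F_\beta\oplus F_c}$ satisfies $\mu_j = c_j\,\sigma_j(\tilde\alpha)$, where $\sigma_j : \Q(\beta) \hookrightarrow \C$ sends $\beta \mapsto \beta_j$ and $c_j \neq 0$ depends only on the fixed normalization of $v_j$. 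Two estimates follow: the Perron coordinate bound $|\tilde\alpha| = |\sigma_1(\tilde\alpha)| \leq C'' \beta^n$ (from $\|\Pi z\| \leq C'\beta^n$), and the algebraic lower bound $\prod_{j=1}^{d+1}|\sigma_j(\tilde\alpha)| = |N_{\Q(\beta)/\Q}(\tilde\alpha)| \geq D^{-(d+1)}$ (because $D\tilde\alpha$ is a non-zero algebraic integer).

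Finally, I would combine with the Pisot \emph{unit} condition: $\prod_{j=1}^{d+1}|\beta_j| = 1$ gives $\prod_{j=2}^{d+1}|\beta_j|^{-n} = \beta^n$. Dividing the two estimates,
\[
\prod_{j=2}^{d+1}\bigl|\sigma_j(\tilde\alpha)\,\beta_j^{-n}\bigr|
= \frac{|N_{\Q(\beta)/\Q}(\tilde\alpha)|}{|\tilde\alpha|}\cdot\beta^n
\geq \frac{D^{-(d+1)}}{C''\beta^n}\cdot\beta^n = c_0 > 0,
\]
independent of $n$ and of $z$. Hence some $j \geq 2$ has $|\sigma_j(\tilde\alpha)\beta_j^{-n}| \geq c_0^{1/d}$. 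Now $Vv_j$ for $j \geq 2$ is an eigenvector of $N$ of eigenvalue $\beta_j$ (from $NVv_j = VMv_j = \beta_j Vv_j$) and the $Vv_2,\dots,Vv_{d+1}$ form a basis of $\C^d$, so
\[
\norm{x-y} = \norm{N^{-n}V z} = \norm{\sum_{j\geq 2}\mu_j\beta_j^{-n}Vv_j} \asymp \max_{j\geq 2}\bigl|\mu_j\beta_j^{-n}\bigr| \geq \epsilon
\]
for some $\epsilon > 0$ depending only on $\sigma$.

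The main obstacle I expect is the algebraic bookkeeping in the middle paragraph: verifying that $\Pi$ is a bounded projection with rational entries of bounded denominator, and that the complex eigen-coordinates $\mu_j$ are genuinely fixed multiples of the Galois conjugates of a single element $\tilde\alpha \in \Q(\beta)$ with bounded denominator. Once that setup is in place, the Pisot unit equality $\prod_{j=1}^{d+1}|\beta_j| = 1$ makes the Perron upper bound (of order $\beta^n$) and the expansion factor $\prod_{j\geq 2}|\beta_j|^{-n} = \beta^n$ cancel exactly against the algebraic-integer lower bound, giving a uniform separation.
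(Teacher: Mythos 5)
Your argument is correct (up to the routine bookkeeping you yourself flag, which does go through: $\pi_\beta$ and $P$ are coprime because the Perron eigenvalue of a primitive matrix is simple, so $\Pi=b(M)P(M)$ is indeed the rational projection onto $\ker\pi_\beta(M)$ along $\ker P(M)$; the restriction of $M$ to $\ker\pi_\beta(M)$ is cyclic since its minimal polynomial $\pi_\beta$ is irreducible of full degree, every coordinate of a cyclic vector in the complex eigenbasis is nonzero, and commensurability of the two full-rank lattices $\Pi\Z^A$ and the image of $\Z[\beta]$ inside the rational subspace gives the bounded denominator $D$). But it is a genuinely different route from the paper. The paper factors $V=V_\beta V_P$, observes that $\Lambda=V_P\Z^A$ is a lattice of $\ker\pi_\beta(M)$ with $M^{-1}\Lambda=\Lambda$ (this is where the unit hypothesis enters), shows that all the sets $\mathcal{T}^n_{a,b}=\{\sum_i M^{i-n}V_Pt_i\}$ lie in a single slab of bounded width around the contracting hyperplane orthogonal (in the duality sense) to a left Perron eigenvector (this is where Pisot enters), and concludes by a cut-and-project argument: only finitely many lattice points of the doubled slab project into any bounded window, and none but $0$ projects to $0$ by total irrationality of the Perron direction, so $V_\beta$ of the slab is uniformly discrete. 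You instead write a difference as $N^{-n}Vz$ with $z\in\Z^A$ of size $O(\beta^n)$, transport $\Pi z$ into $\Q(\beta)$, and play the field-norm lower bound $\abs{N_{\Q(\beta)/\Q}(\tilde\alpha)}\geq D^{-(d+1)}$ against the Perron upper bound, the unit relation $\prod_{j\geq2}\abs{\beta_j}^{-n}=\beta^n$ making the expansion of $N^{-n}$ cancel the growth of $z$ exactly. What each buys: the paper's argument is soft and short once the lattice-plus-window structure is seen, but its $\epsilon$ is non-effective; yours is a Garsia-type estimate that yields an explicit, in principle computable separation constant and bypasses the total-irrationality step (nonvanishing is replaced by $\Pi z\neq0\Rightarrow\tilde\alpha\neq0$ plus the norm bound), at the cost of the number-field bookkeeping. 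Both proofs use the Pisot hypothesis for the $O(\beta^n)$ control and the unit hypothesis in an essential way.
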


In the following, we need some notations.
The projection map $V : \R^A \to \R^d$ can be factorized: $V = V_\beta V_P$, where
$V_P : \R^A \to \ker(\pi_\beta(M))$ is the projection onto $\ker(\pi_\beta(M))$ along $\ker(P(M))$,
	and where $V_\beta : \ker(\pi_\beta(M)) \to \R^d$ is a projection along the Perron eigenspace of $M$.
	The projection $V_\beta$ is totally irrational, since it is a projection along the Perron eigenspace of the endomorphism $M \restr{\ker(\pi_\beta(M))}$ whose characteristic polynomial $\pi_\beta$ is irreducible.
	
\begin{proof}
	As $V_P$ is a projection along a rational subspace, $\Lambda = V_P \Z^A$ is a lattice of $\ker(\pi_\beta(M))$.
	And we have $M \Lambda \subseteq \Lambda$ since $\ker(\pi_\beta(M))$ is invariant under $M$.
	We have $\det(M\restr{\ker(\pi_\beta(M))}) = \pm 1$ since $\beta$ is assumed to be an algebraic unit.
	Thus, we have $M^{-1} \Lambda = \Lambda$.
	
	Now let us consider the set
	\[
		\mathcal{T}_{a,b}^n = \{ \sum_{i=0}^{n-1} M^{i-n}V_P t_i \mid b \xrightarrow{t_n-1} ... \xrightarrow{t_0} a \}.
	\]
	It is a subset of $\Lambda$ since every $t_i$ is in $\Z^A$.
	Moreover, since the $t_i$ are in a finite set, it stays at a bounded distance $D > 0$ of a hyperplane $\mathcal{P}$ of $\ker(\pi_\beta(M))$ which is the orthogonal complement of a left Perron eigenvector of $M\restr{\ker(\pi_\beta(M))}$.
	Let
	\[
		\mathcal{T}_D = \{ x \in \Lambda \mid d(x, \mathcal{P}) \leq D \}.
	\]
	For every $\alpha > 0$, the set $\mathcal{T}_{2D} \cap V_\beta^{-1} B(0, \alpha)$ is finite, since $\ker(V_\beta) \oplus \mathcal{P} = \im(V_P)$.
	Thus, as $V_\beta$ is totally irrational, there exists $\epsilon > 0$ such that $B(0, \epsilon) \cap V_\beta \mathcal{T}_{2D}$ has cardinality one.
	Then, for every $x, y \in V_\beta \mathcal{T}_D$ such that $\norm{x-y} \leq \epsilon$, we have $x-y \in V_\beta \mathcal{T}_{2D} \cap B(0, \epsilon)$ by triangular inequality, so $x=y$.
	In other words, the set $V_\beta \mathcal{T}_D$ is $\epsilon$-separated.
	
	Since we have for every $a,b \in A$ and every $n \in \N$, $\D_{a,b}^n = V_\beta \mathcal{T}_{a,b}^n \subset V_\beta \mathcal{T}_D$, we get the result.
\end{proof}

Such subsets $V_\beta \mathcal{T}_D$ of $\R^d$ are sometimes called cut-and-project sets.

\begin{lemme} \label{lem:inj}
	Under Hypothesis~\ref{hyp:rauzy}, the projection $V$ is one-to-one on $W(v)$, for any bi-infinite word $v \in A^\Z$.
\end{lemme}

\begin{proof}
	Let $V_{\hat \beta} : \R^A \to \ker(M-\beta I)$ be the projection on $\ker(M-\beta I)$ along $\ker(Q(M))$ where $(X-\beta)Q(X)$ is the characteristic polynomial of $M$.
	Let us show that $V_{\hat \beta}$ is one-to-one on $W(v)$.
	We have $V_{\hat \beta} M = \beta V_{\hat \beta}$, so the matrix of $V_{\hat \beta}$ is a left Perron eigenvector of $M$ for any choice of basis of $\ker(M-\beta I)$.
	Thus, we can choose a basis of $\ker(M-\beta I)$ such that it has strictly positive coordinates.
	Now, if we take two distinct elements of $W(v)$, their difference is the abelianization of a non-empty word, so it is a non-negative and non-zero vector of $\Z^A$.
	Thus, its image by $V_{\hat \beta}$ is strictly positive, and $V_{\hat \beta}$ is one-to-one on $W(v)$.
	As, we have $V_{\hat \beta} = V_{\hat \beta} V_P$, it proves that $V_P$ is also one-to-one on $W(v)$.
	Then, the total irrationality of $V_\beta$ and the fact that $V_P W(v)$ is rational give the result.
\end{proof}

\begin{lemme} \label{lem:meas}
	Under Hypothesis~\ref{hyp:rauzy}, the Lebesgue measure of $R_a$ is non-zero for every $a \in A$.
\end{lemme}

\begin{proof}
	The proof is similar to the proof of Proposition~2.8 in~\cite{SW}.
	Thanks to Lemma~\ref{lem:sep}, one can choose $\epsilon > 0$ such that for every $n \in \N$, the set $\D_a^n = \bigcup_{b \in A} \D_{a,b}^n$ is $2\epsilon$-separated.
	Thanks to Lemma~\ref{lem:inj}, the cardinality of the set $\D_{a, u_0}^n$ is $\abs{\sigma^n(u_0)}_a = e_a^t M^n e_{u_0}$, where $u_0$ is the first letter of the fixed point $u$.
	Hence,
	\[
		\lambda(\bigcup_{x \in \D_a^n} B(x, \epsilon)) \geq \sum_{x \in D_{a, u_0}^n} \lambda(B(x,\epsilon)) = e_a^t M^n e_{u_0} \lambda(B(0,\epsilon)).
	\]
	Let $D$ be large enough such that for every $b \in A$, $N \bigcup_{x \in \D_b^n} B(x, D) \subseteq B(0,D)$.
	Then the sequence of sets $N^n \bigcup_{x \in \D_{a}^n} B(x, D)$ decreases, and its intersection is $R_a$,
	thus
	\[
		\lambda(R_a) = \lim_{n \to \infty} \lambda \left( N^n \bigcup_{x \in \D_{a}^n} B(x, D) \right) \geq \liminf_{n \to \infty} \lambda(N^n \bigcup_{x \in \D_{a, u_0}^n} B(x, \epsilon)).
	\]
	This limit is strictly greater than zero since $\frac{1}{\beta^n} M^n$ converges to the matrix in the canonical basis of the projector $V_{\hat \beta}$ defined in the proof of Lemma~\ref{lem:inj}, and since we have $e_a^t V_{\hat \beta} e_{u_0} > 0$.	
\end{proof}

\begin{lemme} \label{lem:nei}
	Under Hypothesis~\ref{hyp:rauzy}, for every $a \in A$, $R_a$ has a non-empty interior and is the closure of its interior.
\end{lemme}

Note that this lemma has similarities with Lemma~8.3.4 in~\cite{AM} but it is not equivalent:
it has different hypotheses, different conclusion and the tools used are not the same even if in both cases the idea is to use the self-similarity of the objects.
To show this lemma, we use the following theorem due to V.F. Sirvent and Y. Wang, see Theorem~3.1 in~\cite{SW}.

\begin{thm}[Sirvent-Wang] \label{thm:sw}
	Let $(X_1, ..., X_J)$ be the attractor of a strongly connected graph-directed IFS
	\[
		A(X_i) = \bigcup_{j = 1}^J (X_j + \D_{ij}), i = 1, ..., J.
	\]
	Assume that there exists $\epsilon > 0$ such that the sets $\D_{i,j}^m$ are $\epsilon$-separated for all $i$, $j$, $m$,
	and assume that $X_1$ has a positive Lebesgue measure.
	Then every $X_i$ has a non-empty interior and is the closure of its interior.
\end{thm}

\begin{proof}[Proof of Lemma~\ref{lem:nei}]
	Let us show that hypotheses of Theorem~\ref{thm:sw} are fulfilled.
	The sets $R_a$, $a \in A$, are the attractor of an equation of this form, with $A = N^{-1}$ and $\D_{ij} = \{ N^{-1} Vt \mid j \xrightarrow{t} i \}$ thanks to Lemma~\ref{lem:gIFS}.
	The sets $\D_{a,b}^n$ are $\epsilon$-separated by Lemma~\ref{lem:sep}.
	And the sets $R_a$ have non-zero Lebesgue measure by Lemma~\ref{lem:meas}.
	Therefore, we can apply the theorem and it gives the result.
\end{proof}

It remains to show that the union
\[
	R_a = \bigcup_{b \xrightarrow{t} a} N R_b + Vt,
\]
is disjoint in measure.
We follow a classical argument due to Host (see~\cite{AI}).
We have the inequality
\[
	\lambda(R_a) \leq \sum_{b \xrightarrow{t} a} \lambda(NR_b) = \frac{1}{\beta} \sum_{b \xrightarrow{t} a} \lambda(R_b).
\]
Let $X = (\lambda(R_a))_{a \in A}$. We get the inequality
$
	X \leq \frac{1}{\beta} M X
$
since the matrix of the prefix-suffix automaton is $M$.
Now we use the following lemma.

\begin{lemme}[Perron-Frobenius] \label{lem-Perron}
	Let $M$ be a primitive positive matrix, with maximal eigenvalue $\lambda$.
	Suppose that $v$ is a positive vector such that $M v \geq \lambda v$.
	Then the inequality is an equality, and $v$ is an eigenvector with respect to $\lambda$.
\end{lemme}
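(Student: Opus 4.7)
The plan is to pair the given inequality $Mv \geq \lambda v$ with a strictly positive \emph{left} Perron eigenvector and exploit the resulting equality in one scalar quantity.

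First I would invoke Perron-Frobenius for the transpose: since $M$ is primitive, so is $\transp{M}$, hence there exists a strictly positive row vector $w \in \R^A$ with $w M = \lambda w$. The point of choosing such a $w$ is that it will turn a componentwise inequality into a scalar one that I can control.

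Next, starting from $M v \geq \lambda v$ (componentwise, with $v \geq 0$), I would multiply on the left by $w$. Because $w$ has strictly positive entries and $M v - \lambda v \geq 0$ componentwise, the scalar $w(Mv - \lambda v)$ is a non-negative combination of non-negative reals. On the other hand, using $w M = \lambda w$, this scalar equals $\lambda w v - \lambda w v = 0$. So each entry of $Mv - \lambda v$, being non-negative and multiplied by a strictly positive weight in a sum equal to zero, must itself vanish. This yields $M v = \lambda v$, which is exactly the conclusion.

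There is no real obstacle here: the whole argument is the standard ``test against the left Perron eigenvector'' trick, and the only thing one must be careful about is to emphasize strict positivity of $w$ (which comes from primitivity of $\transp M$ via the Perron-Frobenius theorem already recalled in the paper), since it is this strict positivity that lets us pass from $w \cdot (Mv - \lambda v) = 0$ to the componentwise equality $Mv = \lambda v$.
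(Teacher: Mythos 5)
Your proof is correct: pairing the hypothesis with a strictly positive left Perron eigenvector $w$ of the primitive matrix $M$ (obtained from Perron--Frobenius applied to the transpose) and using $w(Mv-\lambda v)=\lambda wv-\lambda wv=0$ together with the strict positivity of $w$ is exactly the standard argument, and it does yield $Mv=\lambda v$ componentwise. The paper gives no proof of its own here, referring instead to Lemma~11 of Arnoux--Ito~\cite{AI}, where essentially this same left-eigenvector argument is used, so your write-up supplies precisely the intended (and complete) proof.
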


\begin{proof}
	See~\cite{AI}, Lemma~11.
\end{proof}

We deduce from this lemma that the inequality $X \leq \frac{1}{\beta} MX$ is an equality, thus the union $R_a = \bigcup_{b \xrightarrow{t} a} N R_b + Vt$ is disjoint in Lebesgue measure.

Now, to prove that each $R_a$ has a boundary of zero Lebesgue measure, it suffices to use that some $R_{a_0}$ has non-empty interior, and to iterate
\[
	R_{a_0} = \bigcup_{b \xrightarrow{t_n} \dots \xrightarrow{t_0} a_0} N^{n+1} R_b + \sum_{k=0}^n VM^k t_k,
\]
up to have a term of the union of the form $N^{n+1} R_a + t$ completely included in the interior of $R_{a_0}$.
As the union is disjoint in Lebesgue measure, it gives that the boundary of $R_a$ has zero Lebesgue measure.

It finishes the proof of Properties~\ref{props:usual:rauzy}.

\section{Conjugacy with a domain exchange} \label{sec:de}

In this section, we prove Theorem~\ref{thm:de}.
The domain exchange is obtained as a usual Rauzy fractal for a proper substitution thanks to the following.
The following proposition will also be useful to construct finite extensions of torus translations.
It is a generalization of Lemma~8.2.7 in~\cite{AM}.

\begin{prop} \label{props:de}
	Assume Hypothesis~\ref{hyp:rauzy}, and assume that $\sigma$ is proper.
	Then we have the following.
	\begin{itemize}
		\item the unions $R = \bigcup_{a \in A} R_a = \bigcup_{a \in A} R_a + Ve_a$ are disjoint in Lebesgue measure,
		\item we can define a domain exchange almost everywhere by
			\[
				\begin{array}{rcl}
					E : R &\to& R \\
						x &\mapsto& x + Ve_a \text{ if } x \in R_a,
				\end{array}
			\]
			and it is invertible.
		\item $\phi$ is a measurable conjugacy between the uniquely ergodic subshift $(\Omega_\sigma, S)$ and the domain exchange $(R,E, \lambda)$.
	\end{itemize}
\end{prop}

In particular, we have the following theorem.

\begin{thm} \label{thm:proper:de}
	Let $\sigma$ be a primitive proper substitution such that the Perron eigenvalue of the incidence matrix is a unit Pisot number of degree $d+1$.
	Then, the uniquely ergodic subshift $(\Omega_\sigma, S)$ is measurably isomorphic to a domain exchange $(R,E,\lambda)$, with $R \subset \R^{d}$.
\end{thm}

In these results, the hypothesis that $\sigma$ is proper can be replaced with the strong coincidence hypothesis (see~\cite{AI}).
Notice that this result is already stated in~\cite{BV}, Theorem~6, but without proof, and it is proven but not stated in~\cite{SW} (they assume additional hypothesis that are not really used in their proof).
The result could be generalized by avoiding the hypothesis that the Pisot number is a unit by considering $p$-adic spaces, but it would complicate the proof.

\begin{proof}[Proof of Proposition~\ref{props:de}]
	Let $a_0 \in A$ be the letter such that for every $b \in A$, $\sigma(b)$ starts with the letter $a_0$.
	Hence for every letter $b \in A$, $b \xrightarrow{0} a_0$ is a transition in the abelianized prefix automaton.
	Thus, the union $\bigcup_{b \in A} N R_b$ appears in the union $R_{a_0} = \bigcup_{b \xrightarrow{t} a_0} N R_b + Vt$, so it is Lebesgue disjoint by Properties~\ref{props:usual:rauzy}.
	Then,
	\[
		\lambda(R) = \lambda(\bigcup_{b \in A} R_b + Ve_b) \leq \sum_{b \in A} \lambda(R_b) = \lambda(\bigcup_{b \in A} R_b) = \lambda(R),
	\]
	thus the union $\bigcup_{b \in A} R_b + Ve_b$ is also Lebesgue disjoint.
	
	Then, the domain exchange $E$ can be defined almost everywhere and is invertible.
	Then, let $F_0 = \left( \bigcup_{a \in A} \interior(R_a) \right) \cap \left( \bigcup_{a \in A} \interior(R_a) + V e_a \right)$.
	The maps $E$ and $E^{-1}$ are everywhere defined in $F_0$.
	Then, for every $n \in \N$, we define by induction the open subsets $F_{n+1} = E(F_n) \cap E^{-1}(F_n) \cap F_0$.
	The intersection $F = \bigcap_{n \in \N} F_n$ is a subset of $R$ of full Lebesgue measure being invariant under $E$ and $E^{-1}$.
	
	Now we define the natural coding. Since $V$ is one-to-one on $W(u)$, $\phi$ is one-to-one on $\ane(u)$, and we can define the map $\chi : G \to A$, where $G = F \cup \phi(\ane(u))$, by $\chi(x) = a$ if $x \in F \cap R_a$ and $\chi(x) = u_n$ if $x = \phi(S^n u)$. And we can also define $E$ on $\phi(\ane(u))$ by $E(\phi(S^n u)) = \phi(S^{n+1} u) = \phi(S^n u) + V\ab(u_n)$ and $E$ is well-defined on $G$.
	Then, we define the coding map, well-defined on $G$ by
	\[
		\cod : \begin{array}{rcl}
					G &\to& A^\Z \\
					x &\mapsto& (\chi(E^n x))_{n \in \Z}
				\end{array}.
	\]
	We have $\cod \circ E = S \circ \cod$, and the restriction of $\cod \circ \phi$ to $\ane(u)$ is the identity.
	
	Let us show that $\im(\cod) \subseteq \Omega$.
	For $x \in \phi(\ane(u))$, we have $\cod(x) \in \ane(u) \subseteq \Omega$.
	Let $x \in F$.
	For every $N \in \N$, since the set $F_N$ is open
	there exists a neighborhood $U$ of $x$ such that for every $n \in [-N, N]$, $\chi \circ E^n \restr{U}$ is a constant.
	And since $\phi(\ane(u))$ is dense in $R$, $U$ contains an element of $\phi(\ane(u))$.
	Thus, $\cod(x)$ is arbitrarily close to an element of $\ane(u)$ so it is in $\Omega$.
	
	Moreover, the map $\phi$ is continuous.
	Thus for every $x \in F$ and every $\epsilon > 0$, there exists a neighborhood $U$ of $x$ whose image by $\phi \circ \cod$ has a diameter at most $\epsilon$. And for $y \in U \cap \phi(\ane(u)) \cap B(x,\epsilon)$, we have $\phi \circ \cod (y) = y$, thus $\abs{\phi \circ \cod (x) - x} \leq \abs{\phi \circ \cod (x) - \phi \circ \cod(y)} + \abs{y - x} \leq 2 \epsilon$. We deduce that $\phi \circ \cod$ is the identity map of $G$.
	
	Now, let $\mu$ be the push-forward measure of the Lebesgue measure $\lambda$ by the continuous map $\cod \restr{F}$.
	Then $\mu$ is an invariant measure and
	we get that $(\Omega, S, \mu)$ is isomorphic to $(R, E, \lambda)$.
\end{proof}

Thank to Theorem~\ref{thm:dp}, we can proprify $\sigma$.
Thus, Theorem~\ref{thm:de} is a consequence of Theorem~\ref{thm:proper:de}.

\section{Eigenvalues of the dynamical system} \label{sec:vp}

This section aims to prove Theorem~\ref{thm:vp}.
Note that for one implication, we don't need properness or unimodularity:

\begin{prop} \label{prop:vp}
	Let $\sigma$ be a primitive aperiodic substitution.
	Assume that there exists a row vector $w \in \Z^A$ such that for every generalized eigenvector $v$ for an eigenvalue of modulus $\geq 1$, we have
	Then $e^{2 i \pi \alpha}$ is an eigenvalue of $(\Omega_\sigma, S)$.
\end{prop}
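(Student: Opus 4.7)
The plan is to exhibit a continuous eigenfunction $f \colon \Omega_\sigma \to \S^1$ satisfying $f \circ S = e^{2i\pi\alpha} f$, built by adapting the Rauzy-fractal machinery of Lemma~\ref{lem:cv} to a $1$-dimensional linear form.

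First, I would reformulate the hypothesis. Let $s = (1, \ldots, 1) \in \Z^A$ denote the sum row vector and set $\ell := \alpha s - w$. Any generalized eigenvector $v$ of $M = M_\sigma$ for an eigenvalue of modulus $\geq 1$ with $sv \neq 0$ can be rescaled to have sum $1$, so by linearity the hypothesis on $w$ is equivalent to $\ell v = 0$ for every generalized eigenvector $v$ associated to an eigenvalue of modulus $\geq 1$. Hence $\ell$ vanishes on the $M$-stable subspace $E_{\geq 1} = \bigoplus_{|\lambda| \geq 1} \ker((M - \lambda I)^{k_\lambda})$. Decomposing $\R^A = E_{\geq 1} \oplus E_{<1}$ and noting that $M$ preserves this splitting, one has $\ell M^n x = \ell \left( M|_{E_{<1}} \right)^n p_{<1}(x)$, where $p_{<1}$ is the projection onto $E_{<1}$ along $E_{\geq 1}$. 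Since $M|_{E_{<1}}$ has spectral radius $<1$, the series $\sum_n \|\ell M^n\|$ converges geometrically.

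Up to replacing $\sigma$ by a suitable power (which preserves $\Omega_\sigma$, the shift, and each generalized eigenspace), I may fix an admissible fixed point $u$ of $\sigma$. Applying Lemma~\ref{lem:cv} with the linear form $\ell \colon \R^A \to \R$ in place of the projection $V$ yields a continuous function $h := \phi_{u,\ell,\Omega_\sigma} \colon \Omega_\sigma \to \R$ defined by $h(v) = \sum_{n=0}^\infty \ell M^n t_n(v)$, where $(t_n(v))$ is the sequence of abelianized prefixes from the Dumont--Thomas expansion of subsection~\ref{ss:dt}. On the dense orbit $\ane(u)$, the identity $\ab(u_{[0,m)}) = \sum_n M^n t_n(S^m u)$ gives
$$ h(S^m u) \;=\; \alpha \, s \cdot \ab(u_{[0,m)}) \;-\; w \cdot \ab(u_{[0,m)}) \;=\; \alpha m \;-\; w \cdot \ab(u_{[0,m)}) . $$
Since $w \in \Z^A$ and $\ab(u_{[0,m)}) \in \Z^A$, the correction $w \cdot \ab(u_{[0,m)})$ is an integer, so $h(S^m u) \equiv \alpha m \pmod{\Z}$. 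Setting $f := e^{2i\pi h}$, we obtain $f(S^m u) = e^{2i\pi\alpha m}$, hence $f \circ S = e^{2i\pi\alpha} f$ on the dense orbit $\ane(u)$; by continuity of $f$ and $S$ this extends to all of $\Omega_\sigma$, proving that $e^{2i\pi\alpha}$ is an eigenvalue.

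The main obstacle is really the first step---checking that $\ell$ genuinely vanishes on all of $E_{\geq 1}$ (clean once one scales $v$ by $1/sv$), and extracting geometric decay of $\|\ell M^n\|$ from the $M$-invariant splitting. Everything afterwards is a direct application of Lemma~\ref{lem:cv}, and the crucial ``integer coboundary'' observation $w \cdot \ab(u_{[0,m)}) \in \Z$, which requires $w \in \Z^A$, is what lets the auxiliary real-valued $h$ descend to a genuine $\S^1$-valued eigenfunction modulo $\Z$.
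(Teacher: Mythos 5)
Your proof is correct and follows essentially the same route as the paper: the paper also sets $\ell = \alpha(1,\dots,1) - w$, proves $\ell M^n \to 0$ exponentially (the reverse implication of Lemma~\ref{lem:charw2}, which you re-derive via the spectral splitting $E_{\geq 1}\oplus E_{<1}$), applies Lemma~\ref{lem:cv} to get the continuous map $\phi_\ell$, and uses $\ell e_a \equiv \alpha \pmod{\Z}$ to descend to an $\S^1$-valued eigenfunction $e^{2i\pi\phi_\ell}$.
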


For the reciprocal, we need the following proposition (see Proposition~13 in~\cite{durand-petite}).

\begin{prop} \label{prop:dp}
	Let $\sigma$ be a primitive proper substitution.
	If $e^{2 i \pi \alpha}$ is an eigenvalue of $(\Omega_\sigma, S)$ then
	$
		\alpha (1, \dots, 1) M_\sigma^n \xrightarrow[n \to \infty]{} 0 \text{ mod } \Z
	$.
\end{prop}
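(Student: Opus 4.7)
The plan is to exploit both the left- and right-properness of $\sigma$ in two stages: first to show that $f \circ \sigma^n$ is nearly constant on each cylinder $[a]$, and then to read off the arithmetic condition from the orbit of an admissible fixed point $u$. Let $f : \Omega_\sigma \to \S^1$ be a continuous eigenfunction with $f \circ S = e^{2 i \pi \alpha} f$; by compactness of $\Omega_\sigma$, $f$ is uniformly continuous.

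\textbf{Step 1 (contraction of $\sigma^n$ on cylinders).} Let $a_0, a_1 \in A$ denote the left- and right-common letters of $\sigma$. For any $v \in [a]$ and any $n \ge 1$, the prefix $\sigma^n(v)_{[0,|\sigma^n(a)|)} = \sigma^n(a)$ depends only on $a$; on the other side, right-properness makes $\sigma(v_{-1})$ end with $a_1$, so iterating $\sigma^n(v_{-1}) = \sigma^{n-1}(\sigma(v_{-1}))$ ends with $\sigma^{n-1}(a_1)$, giving $\sigma^n(v)_{[-|\sigma^{n-1}(a_1)|,0)} = \sigma^{n-1}(a_1)$, again independent of $v$. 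Primitivity sends both lengths to infinity, so $\diam(\sigma^n([a])) \to 0$, and uniform continuity of $f$ yields constants $c_n^{(a)} \in \S^1$ with
\[
\sup_{v \in [a]} |f(\sigma^n(v)) - c_n^{(a)}| \xrightarrow[n \to \infty]{} 0.
\]

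\textbf{Step 2 (orbit of a fixed point).} A proper primitive substitution admits an admissible fixed point $u \in \Omega_\sigma$ with $u_0 = a_0$ (classically, $u^+ = \lim_n \sigma^n(a_0)$ and $u^- = \lim_n \sigma^n(a_1)$ read from the right, glued at the centre; the central factor $a_1 a_0$ is admissible because $\sigma(a_0 b)$ contains it as soon as $a_0 b$ is admissible, which holds for some $b$ by primitivity). Then $\sigma^n(u) = u$, and for every $k \ge 0$ the shift identity
\[
\sigma^n(S^k u) = S^{|\sigma^n(u_{[0,k)})|} \sigma^n(u) = S^{|\sigma^n(u_{[0,k)})|} u
\]
together with $f \circ S = e^{2i\pi\alpha} f$ gives
\[
f(\sigma^n(S^k u)) = e^{2 i \pi \alpha |\sigma^n(u_{[0,k)})|} f(u).
\]
Since $S^k u \in [u_k]$, Step~1 says the left-hand side equals $c_n^{(u_k)} + o(1)$ uniformly in $k$. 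Taking $k=0$ yields $c_n^{(a_0)} \to f(u)$, and for any $k$ with $u_k = a_0$ the two approximations of $c_n^{(a_0)}$ combine (using $|f(u)|=1$) to
\[
\alpha |\sigma^n(u_{[0,k)})| \xrightarrow[n \to \infty]{} 0 \mod \Z.
\]

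\textbf{Step 3 (separating the letters).} For every $j \ge 1$, set $k = |\sigma(u_{[0,j)})|$. Left-properness makes $u_k$ the first letter of the block $\sigma(u_j)$, so $u_k = a_0$; moreover $u_{[0,k)} = \sigma(u_{[0,j)})$, hence $|\sigma^n(u_{[0,k)})| = |\sigma^{n+1}(u_{[0,j)})|$. Step~2 therefore gives $\alpha |\sigma^{n+1}(u_{[0,j)})| \to 0 \mod \Z$ for every $j \ge 0$, and subtracting consecutive values of $j$ produces $\alpha |\sigma^{n+1}(u_j)| \to 0 \mod \Z$. By primitivity every letter of $A$ appears as some $u_j$ with $j \ge 0$, so $\alpha |\sigma^m(a)| \to 0 \mod \Z$ as $m \to \infty$ for every $a \in A$. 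The $a$-th coordinate of the row vector $(1, \dots, 1) M_\sigma^m$ being exactly $|\sigma^m(a)|$, this is the stated conclusion.

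The main obstacle will be Step~1: one needs \emph{both} halves of properness together with primitivity to obtain the uniform contraction $\diam(\sigma^n([a])) \to 0$, and this is the geometric ingredient that fails for merely primitive substitutions; once it is in hand, Steps~2 and~3 are a controlled telescoping of the eigenvalue relation along the prefixes of $u$.
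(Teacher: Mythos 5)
Your proof is correct. Note first that the paper does not actually prove Proposition~\ref{prop:dp}: it is quoted from Durand--Petite (Proposition~13 of \cite{durand-petite}), whose argument runs through Host's structure theorem for eigenvalues of substitution subshifts --- the decomposition $\alpha(1,\dots,1)=x_1+x_2+x_3$ with $x_1M^{pn}\to 0$, $x_2M^{pn}\in\Z^A$ eventually, and $x_3$ defining a coboundary --- combined with the observation that a (left-)proper substitution admits only trivial coboundaries. Your route is genuinely different and more elementary: you never mention coboundaries, and instead use both halves of properness to get $\diam(\sigma^n([a]))\to 0$ (the images of any two points of $[a]$ agree on the window $[-\abs{\sigma^{n-1}(a_1)},\abs{\sigma^n(a)})$, which grows by primitivity), then evaluate the eigenvalue relation along the orbit of the two-sided fixed point $u$ and telescope over the positions $k=\abs{\sigma(u_{[0,j)})}$ where left-properness guarantees $u_k=a_0$. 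All the steps check out: the commutation $\sigma^n(S^ku)=S^{\abs{\sigma^n(u_{[0,k)})}}u$, the identification $c_n^{(a_0)}\to f(u)$ from $k=0$, the division by $f(u)\in\S^1$, and the final subtraction giving $\alpha\abs{\sigma^m(a)}\to 0 \bmod \Z$ for every letter, which is exactly the componentwise statement since the $a$-th entry of $(1,\dots,1)M_\sigma^m$ is $\abs{\sigma^m(a)}$. What the coboundary approach buys is generality --- it isolates exactly which weakening of properness suffices (no non-trivial coboundary, initials period $1$), which the paper exploits in the remarks after Theorem~\ref{thm:vp}; what your approach buys is a short self-contained proof under the stated two-sided properness hypothesis, with the only fragile point being the one you correctly flag, namely that the cylinder contraction in Step~1 is where properness is genuinely used.
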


And we give a characterization of this condition:

\begin{lemme} \label{lem:eqw}
	Let $\alpha \in \R$ and let $M \in M_d(\Z)$ be a pseudo-unimodular matrix.
	We have the equivalence.
	\begin{alignat*}{2}
		 & \alpha (1, \dots, 1) M^n \xrightarrow[n \to \infty]{} 0 \text{ mod } \Z^d, & & \\
		\ArrowBetweenLines
		 & \exists w \in \Z^d, (\alpha (1, \dots, 1) - w)M^n \xrightarrow[n \to \infty]{} 0. & &
	\end{alignat*}
\end{lemme}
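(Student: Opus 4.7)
The plan is to prove the two implications separately. The backward one is nearly immediate; the forward direction is where pseudo-unimodularity genuinely enters.

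For $\Leftarrow$, if $w \in \Z^d$ satisfies $(\alpha(1,\dots,1) - w)M^n \to 0$, then $wM^n \in \Z^d$ because $M$ has integer entries, so summing gives $\alpha(1,\dots,1)M^n \to 0 \bmod \Z^d$.

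For $\Rightarrow$, I pick integer row vectors $w_n$ with $\epsilon_n := \alpha(1,\dots,1)M^n - w_n \to 0$ and exploit the identity
\[
w_{n+1} - w_n M \;=\; \epsilon_n M - \epsilon_{n+1} \;\xrightarrow[n \to \infty]{}\; 0.
\]
Since the left-hand side is an integer vector, it must vanish for $n$ large: there exists $N$ such that $w_n = w_N M^{n-N}$ for every $n \geq N$. The whole strategy then reduces to finding $w \in \Z^d$ with $wM^N = w_N$, because iteration will give $wM^n = w_n$ for $n \geq N$ and hence $(\alpha(1,\dots,1) - w)M^n = \epsilon_n \to 0$.

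To construct $w$, I use the $M$-invariant Fitting decomposition $\R^d = I \oplus K$, where $K$ is the generalized $0$-eigenspace of $M$ and $I$ is the sum of the generalized eigenspaces for the nonzero eigenvalues. Both factors are rational subspaces (for $\ell$ large, $I$ is the rowspace of $M^\ell$ and $K$ is the kernel of $v \mapsto vM^\ell$), so $I \cap \Z^d$ is a full-rank sublattice of $I$. Since $M|_K$ is nilpotent, the $K$-component of $w_n = w_N M^{n-N}$ vanishes as soon as $n-N$ exceeds the nilpotency index, so up to enlarging $N$ I may assume $w_N \in I \cap \Z^d$. Pseudo-unimodularity now yields $\det(M|_I) = \pm 1$ (the product of the nonzero eigenvalues), and since $M|_I$ preserves the lattice $I \cap \Z^d$, it is represented in any $\Z$-basis of $I \cap \Z^d$ by a matrix in $\mathrm{GL}_{\dim I}(\Z)$; the inverse $(M|_I)^{-N}$ therefore also preserves $I \cap \Z^d$. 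Setting $w := w_N (M|_I)^{-N}$ gives an element of $I \cap \Z^d \subseteq \Z^d$ satisfying $wM^N = w_N$ by construction.

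The main obstacle I expect is precisely this last inversion step: one has to verify carefully that $I$ is rational (so that $I \cap \Z^d$ is a genuine full-rank lattice) and that pseudo-unimodularity really gives $\det(M|_I) = \pm 1$. Together they are what permit dividing by $M^N$ while staying in $\Z^d$; without them one can at best produce $w \in \Q^d$, and a simple example such as $M = (2)$ with $\alpha = 1/2$ already shows that the statement fails when $M$ is not pseudo-unimodular.
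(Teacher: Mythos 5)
Your proof is correct and follows essentially the same route as the paper: the same reduction via nearest integer vectors $w_n$ with the eventual relation $w_{n+1} = w_n M$, followed by the same key step of inverting $M$ on the integer points of the rational invariant subspace complementary to the generalized kernel, using pseudo-unimodularity to get determinant $\pm 1$ and hence a $\mathrm{GL}(\Z)$ action on the lattice. The paper merely packages this last step as a separate lemma applied to the transpose of $M$, while you carry it out directly on row vectors via the Fitting decomposition; the content is identical.
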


To prove this equivalence, we need the following.

\begin{lemme} \label{ml}
	Let $M \in M_d(\Z)$ be a pseudo-unimodular matrix.
	Then there exists $m \in \N_{\geq 1}$ such that for all $y \in \Z^d \cap \im(M^m)$,
	there exists $x \in \Z^d \cap \im(M^m)$ such that $M^mx = y$.
\end{lemme}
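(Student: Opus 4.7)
The plan is to use the Fitting decomposition of $M$ acting on $\R^d$, and then observe that pseudo-unimodularity forces $M$ to act as a unimodular automorphism of the stable lattice inside the stable image.

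First I would pick $m \geq 1$ large enough so that the decreasing chain $\im(M) \supseteq \im(M^2) \supseteq \dots$ stabilizes, i.e.\ $V := \im(M^m) = \im(M^{m+k})$ for every $k \geq 0$. Then $M\restr{V} : V \to V$ is surjective and hence (as $V$ is finite-dimensional) an automorphism of $V$. This is the Fitting decomposition $\R^d = V \oplus \ker(M^m)$: $M$ is nilpotent on $\ker(M^m)$ and invertible on $V$, and the spectrum of $M\restr{V}$ consists exactly of the non-zero eigenvalues of $M$, with their multiplicities.

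Next I would show that $V$ is a rational subspace. Indeed, since $M$ has integer entries, $V$ is spanned by $M^m e_1, \dots, M^m e_d \in \Z^d$, hence $L := \Z^d \cap V$ is a lattice of full rank in $V$. Moreover $M^m(\Z^d) \subseteq \Z^d$ and $M^m(V) \subseteq V$, so $M^m(L) \subseteq L$ and we obtain an induced endomorphism $M^m\restr{L}$ of $L$.

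To conclude, expressing $M^m\restr{V}$ in a $\Z$-basis of $L$ gives an integer matrix, whose determinant equals the determinant of $M^m\restr{V}$ computed in any $\R$-basis of $V$. By the previous paragraph this determinant is the product of the $m$-th powers of the non-zero eigenvalues of $M$, which by the pseudo-unimodularity assumption is $(\pm 1)^m = \pm 1$. So $M^m\restr{L}$ is represented by an integer matrix of determinant $\pm 1$, hence defines a bijection of $L$ onto itself. Since $\Z^d \cap \im(M^m) = L$, this is exactly the statement of the lemma: every $y \in L$ is $M^m x$ for some $x \in L$.

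The only subtle step in the above is the identification of $\det(M^m\restr{L})$ (an integer, computed in a $\Z$-basis of $L$) with the product of the non-zero eigenvalues of $M^m$. This relies on two separate ingredients, rationality of $V$ (to ensure $L$ has full rank in $V$, so that a $\Z$-basis of $L$ is an $\R$-basis of $V$) and the Fitting description of $M\restr{V}$ (to identify its spectrum with the non-zero spectrum of $M$); everything else is a direct book-keeping.
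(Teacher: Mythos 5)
Your proof is correct and follows essentially the same route as the paper's: choose $m$ at the Fitting index so that $\ker(M^m)$ and $\im(M^m)$ are supplementary, use rationality of $\im(M^m)$ to get the full-rank lattice $\Z^d \cap \im(M^m)$ preserved by $M^m$, and observe that pseudo-unimodularity makes the restricted map an integer matrix of determinant $\pm 1$, hence a lattice automorphism. The only difference is presentational (you spell out the identification of the determinant with the product of non-zero eigenvalues, which the paper leaves implicit).
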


\begin{proof}
	Let $m \geq 1$ such that $\ker(M^m)$ and $\im(M^m)$ are supplementary subspaces.
	Since $\im(M^m)$ is a rational subspace, the intersection $\Lambda = \im(M^m) \cap \Z^d$ is a lattice of $\im(M^m)$.
	Let $f : \im(M^m) \to \im(M^m)$ be the restriction of $M^m$ to $\im(M^m)$.
	We have $f(\Lambda) \subseteq \Lambda$, and the pseudo-unimodular hypothesis gives $\det(f) = \pm 1$.
	The matrix of $f$ in a basis of the lattice is in $GL_r(\Z)$, where $r$ is the rank of $M^m$, thus $f^{-1}(\Lambda) \subseteq \Lambda$.
\end{proof}

\begin{proof}[Proof of Lemma~\ref{lem:eqw}]
	Assume that
	\[
		\alpha (1, \dots, 1) M^n \xrightarrow[n \to \infty]{} 0 \text{ mod } \Z^d.
	\]
	Let $a_n \in \Z^d$ be the row vector such that $\alpha (1,...,1)M^n - a_n \in (-1/2, 1/2]^d$.
	We have $a_{n} M - a_{n+1} \xrightarrow[n \to \infty]{} 0$, so there exists $n_0 \in \N$ such that for every $n \geq n_0$,
	$a_{n+1} = a_n M$.
	Let $m \geq 1$ be given by Lemma~\ref{ml} for the matrix $M^t$.
	Then, there exists a row vector $w \in \Z^d$ such that $a_n = w M^n$ for every $n \geq (n_0 +1) m$.
	Hence, we have
	$(\alpha (1,...,1) - w) M^n \xrightarrow[n \to \infty]{} 0$.
	The reciprocal is obvious.
\end{proof}

Now we give another characterization of the condition.

\begin{lemme} \label{lem:charw2}
	Let $\alpha \in \R$, let $M$ be a matrix of size $d$, and let $w \in \Z^d$ be a row vector.
	We have $(\alpha (1, \dots, 1) - w)M^n \xrightarrow[n \to \infty]{} 0$ if and only if
	for every generalized eigenvector $v$ for an eigenvalue of modulus $\geq 1$,
	\[
		\left\{\begin{array}{ll}
			w v = \alpha & \text{ if $v$ has sum $1$}, \\
			w v = 0		& \text{ if $v$ has sum $0$}.
		\end{array}\right.
	\]
	Moreover, the convergence is exponential.
\end{lemme}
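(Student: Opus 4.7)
The plan is to re-encode the spectral condition on $w$ as a single linear-algebraic condition on the row vector $u := \alpha(1,\ldots,1) - w$, and then read off the behaviour of $uM^n$ from the Jordan decomposition of $M$.

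First I would observe that for any vector $v$, the quantity $(1,\ldots,1)v$ equals the sum of the coordinates of $v$, so the two bullet conditions combine into the single identity $wv = \alpha \cdot \operatorname{sum}(v) = \alpha(1,\ldots,1)v$, i.e.\ $uv = 0$, to be verified for every generalized eigenvector $v$ attached to an eigenvalue of modulus at least $1$. Since such vectors span the $M$-invariant subspace $E_{\geq 1} := \bigoplus_{|\lambda|\geq 1} \ker((M-\lambda I)^d)$ (I work over $\C$; the subspace is stable under complex conjugation, which is harmless because $u$ is real), the eigenvector condition is exactly the assertion that the linear form $u$ vanishes on $E_{\geq 1}$.

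Then I would use the dual decomposition. The generalized eigenspaces of $M^{\mathrm{t}}$ split the row-vector space as $\bigoplus_\lambda F_\lambda$, and the canonical pairing satisfies $F_\lambda E_\mu = 0$ whenever $\lambda \neq \mu$ (the standard duality between left and right generalized eigenspaces, obtained cleanly by taking a Jordan basis of $M$ and passing to its dual). Therefore $u$ vanishes on $E_{\geq 1}$ if and only if the component of $u$ in $F_\lambda$ is zero for every $|\lambda| \geq 1$, if and only if $u \in \bigoplus_{|\lambda|<1} F_\lambda$.

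Finally I would read off the convergence from the Jordan form. Writing $u = \sum_\lambda u_\lambda$ with $u_\lambda \in F_\lambda$, each $u_\lambda M^n$ is a combination of terms of shape $n^k \lambda^n$ (with $k$ bounded by the size of the largest Jordan block at $\lambda$) times fixed row vectors. Since the distinct $F_\lambda$ are linearly independent, $uM^n \to 0$ if and only if $u_\lambda M^n \to 0$ for every $\lambda$, if and only if $u_\lambda = 0$ for every $|\lambda| \geq 1$, which is the eigenvector condition of the first paragraph. When this holds, the decay is dominated by the surviving terms $n^{k_\lambda} |\lambda|^n$, all with $|\lambda|<1$, hence exponential. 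I expect the only step deserving extra care to be the duality between left and right generalized eigenspaces used in the middle paragraph; the rest is routine Jordan-form bookkeeping.
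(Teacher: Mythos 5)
Your proof is correct, but it is organized differently from the paper's. You decompose the row vector $u = \alpha(1,\dots,1) - w$ itself in the dual (left) generalized eigenspace decomposition $\bigoplus_\lambda F_\lambda$ of the row space and invoke the orthogonality $F_\lambda E_\mu = 0$ for $\lambda \neq \mu$, so that both implications fall out of the single equivalence ``$u$ vanishes on $E_{\geq 1}$ if and only if $u \in \bigoplus_{\abs{\lambda}<1} F_\lambda$''. The paper never decomposes $u$: for the direct implication it pairs $u M^n$ against a fixed right generalized eigenvector $v$ and runs an induction on the nilpotency index $k$, using that $M^n v - \beta^n v \in \ker((M-\beta I)^{k-1})$, to conclude $uv = 0$; for the converse it decomposes an arbitrary column vector $x$ into generalized eigenvectors and deduces $uM^n x \to 0$. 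Your route buys a symmetric, one-shot statement and makes the exponential rate transparent (governed, up to polynomial factors, by the largest eigenvalue of modulus $<1$), at the price of the left/right duality fact and of the claim that $u_\lambda M^n \not\to 0$ whenever $u_\lambda \neq 0$ and $\abs{\lambda} \geq 1$; the latter deserves one explicit line, namely isolating the dominant Jordan term $\binom{n}{k-1}\lambda^{n-k+1}\, u_\lambda (M-\lambda I)^{k-1}$ with $k$ minimal, which is exactly the computation the paper's induction packages in the other direction. Also keep your scaling remark (any generalized eigenvector of nonzero sum is a multiple of one of sum $1$) to pass from the two bullet conditions to the vanishing of $u$ on all of $E_{\geq 1}$. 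The paper's argument is marginally more elementary and self-contained; yours is slightly more conceptual; both establish the lemma, including the exponential convergence.
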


\begin{proof}[Proof]
	[$\Longrightarrow$]
	Let us show that for every generalized eigenvector $v$ for an eigenvalue $\beta$ with $\abs{\beta} \geq 1$, we have
	$(\alpha (1,...,1) - w) v = 0$.
	We show it by induction on $k \geq 1$ such that $(M - \beta I)^k v = 0$ and $(M - \beta I)^{k-1} v \neq 0$.
	\begin{itemize}
		\item If $k=1$, then $(\alpha (1,...,1) - w) M^n v = \beta^n (\alpha (1,...,1) - w) v \xrightarrow[n \to \infty]{} 0$.
				Thus, it implies that $(\alpha (1,...,1) - w) v = 0$.
		\item If $k > 1$,
			for every $n \in \N$, we have
			\[
				M^n v - \beta^n v = \sum_{i=1}^{k-1} \binom{n}{i} \beta^{n-i} (M-\beta I)^i v \in \ker((M-\beta I)^{k-1}),
			\]
			so by the induction hypothesis,
			we have $\beta^n (\alpha (1,...,1) - w) v \xrightarrow[n \to \infty]{} 0$ and we conclude.
	\end{itemize}
	
	Now, if $v$ has sum $1$, the equality $(\alpha (1,...,1) - w) v = 0$ implies $\alpha = w v$.
	If $v$ has sum $0$, it implies $w v = 0$.
	
	[$\Longleftarrow$]
	Let $x \in \R^d$. Then, there exists coefficients $c_1, ..., c_{d} \in \C$ and generalized eigenvectors $v_1$, \dots, $v_{d}$ such that $x = c_1 v_1 + \dots + c_{d} v_{d}$. We can assume that each vector $v_i$ as a sum $0$ or $1$.
	Then, we have
	$(\alpha (1,...,1) - w) M^n x = \sum_{i=1}^{d} c_i (\alpha (1,...,1) - w) M^n v_i$.
	If $v_i$ is associated to an eigenvalue $\beta_i$ with $\abs{\beta_i} < 1$, then $M^n v_i \xrightarrow[n \to \infty]{} 0$.
	Otherwise, the hypothesis gives $(\alpha (1,...,1) - w) v_i = 0$.
	Thus, $\forall x \in \R^d$, $(\alpha (1,...,1) - w) M^n x \xrightarrow[n \to \infty]{} 0$.
	We conclude that $(\alpha (1,...,1) - w) M^n \xrightarrow[n \to \infty]{} 0$.
	Moreover, the convergence is exponential.
\end{proof}

\begin{lemme}
	Let $\sigma$ be a primitive aperiodic substitution, and let $\alpha \in \R$.
	Suppose there exists a row vector $w \in \Z^A$ such that
	$(\alpha (1, \dots, 1) - w)M_\sigma^n \xrightarrow[n \to \infty]{} 0$.
	
	Then, $e^{2 i \pi \alpha}$ is an eigenvalue of the dynamical system $(\Omega_\sigma, S)$.
\end{lemme}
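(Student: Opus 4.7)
The plan is to build an explicit continuous eigenfunction $f : \Omega_\sigma \to \S^1$ by reusing the Rauzy-fractal machinery of Section~\ref{sec:gr} in the one-dimensional degenerate case where the ``projection'' is the row vector $V := \alpha(1,\dots,1) - w \in \R^A$, viewed as a linear map $\R^A \to \R$.

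First I would check that the hypotheses of Lemma~\ref{lem:cv} are met for $V$ (applied to $\sigma$ itself if $\sigma$ has an admissible fixed point, and otherwise to a suitable power $\sigma^k$, which has the same subshift since $\sigma$ is primitive, and whose incidence matrix $M_\sigma^k$ still satisfies the same convergence property). By assumption $(\alpha(1,\dots,1) - w)M_\sigma^n \to 0$; Lemma~\ref{lem:charw2} tells us this convergence is in fact exponential, so $\sum_{n \in \N} \norm{VM_\sigma^n}$ converges. Lemma~\ref{lem:cv} then yields a well-defined continuous map $\phi_V : \Omega_\sigma \to \R$.

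Next, I would apply Properties~\ref{props-rauzy-fractal}, which give the cocycle identity
\[
\phi_V(Sv) \;=\; \phi_V(v) + V \ab(v_0) \;=\; \phi_V(v) + V e_{v_0}
\]
for every $v \in \Omega_\sigma$. By construction, $V e_{v_0} = \alpha - w_{v_0}$, where $w_{v_0} \in \Z$ is the $v_0$-th component of the integer line vector $w$. Therefore the continuous map
\[
f : \Omega_\sigma \to \S^1, \qquad v \mapsto e^{2 i \pi \phi_V(v)}
\]
satisfies
\[
f(Sv) \;=\; e^{2 i \pi (\phi_V(v) + \alpha - w_{v_0})} \;=\; e^{2 i \pi \alpha} \, f(v),
\]
because $e^{-2 i \pi w_{v_0}} = 1$. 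Hence $f$ is a continuous eigenfunction with eigenvalue $e^{2 i \pi \alpha}$.

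The only nontrivial step is the first one, namely producing the continuous map $\phi_V$ from the abstract hypothesis $(\alpha(1,\dots,1)-w)M_\sigma^n \to 0$; this is where Lemma~\ref{lem:charw2} is used to upgrade mere convergence to exponential convergence, which in turn is what makes the series in Lemma~\ref{lem:cv} absolutely summable. Once $\phi_V$ is in hand, the cocycle relation and the integrality of $w$ make the identity $f \circ S = e^{2 i \pi \alpha} f$ immediate, and the eigenfunction $f$ is nonzero (in fact of modulus one), so $e^{2 i \pi \alpha}$ is indeed an eigenvalue of $(\Omega_\sigma, S)$.
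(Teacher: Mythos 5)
Your proposal is correct and follows essentially the same route as the paper: exponential convergence via Lemma~\ref{lem:charw2}, summability feeding Lemma~\ref{lem:cv} to define $\phi_V$ (passing to a power of $\sigma$ for an admissible fixed point), the cocycle relation from Properties~\ref{props-rauzy-fractal}, and integrality of $w$ to conclude that $e^{2i\pi\phi_V}$ is a continuous eigenfunction for $e^{2i\pi\alpha}$. Nothing essential differs from the paper's argument.
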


\begin{proof}
	Let $v = \alpha (1,...,1) - w$.
	Thanks to Lemma~\ref{lem:charw2}, the convergence of $v M^n$ is exponential.
	Now by Lemma~\ref{lem:cv}, the map $\phi_v = \phi_{u, v, \Omega_\sigma}$ is well-defined, for an admissible fixed point $u$ of $\sigma$ (we can assume that $\sigma$ has an admissible fixed point up to replace $\sigma$ by a power of itself).
	For every $a \in A$, we have $v e_a = \alpha$ mod $\Z$,
	so we have $\phi_v \circ S = \phi_v + \alpha$ mod $\Z$, by Proposition~\ref{props-rauzy-fractal}.

	Thus we get that $\pi \circ \phi_v : \Omega_\sigma \to \R/\Z$ is well defined, where $\pi: \R \to \R/\Z$ is the canonical projection, and we have
	$\pi \circ \phi_v \circ S = \pi \circ \phi_v + \alpha$.
	We conclude that $e^{2i\pi\alpha}$ is an eigenvalue of $(\Omega_\sigma, S)$ for the continuous eigenfunction $e^{2i\pi (\pi \circ \phi_v)}$.
\end{proof}

Now, Proposition~\ref{prop:vp} and Theorem~\ref{thm:vp} are obvious consequences of these lemmas.

In the particular case of unimodular substitutions, Proposition~\ref{prop:dp} gives the following.

\begin{lemme} \label{lem:unimod}
    Let $\sigma$ be a primitive proper unimodular substitution.
    Then, the only eigenvalue of the subshift $(\Omega_\sigma, S)$ being a root of unity is $1$.
\end{lemme}

\begin{proof}
    Let $\alpha \in \Q$ such that $e^{i 2 \pi \alpha}$ is an eigenvalue of the subshift $(\Omega_\sigma, S)$,
    and let $p \geq 1$ be an integer such that $\alpha p \in \Z$. 
    Since $M_\sigma$ is unimodular, it is in the finite group $GL(\Z/p\Z)$ modulo $p$, thus there exists $k \geq 1$ such that $M_\sigma^k$ is the identity matrix modulo $p$.
    By Proposition~\ref{prop:dp} we have $\alpha (1, \dots, 1) M_\sigma^{kn} \xrightarrow[n \to \infty]{} 0 \text{ mod } \Z^A$.
    Thus $\alpha (1,...,1) \xrightarrow[n \to \infty]{} 0 \text{ mod } \Z^A$, so $\alpha \in \Z$.
\end{proof}

\section{Explicit computation of eigenvalues} \label{sec:cvp}

Thanks to Theorem~\ref{thm:dp} and Theorem~\ref{thm:vp}, we can compute the eigenvalues of the subshift for any primitive pseudo-unimodular substitution.
The aim of this section is to provide an explicit computation algorithm.
From an input substitution, we compute a proper substitution in Subsection~\ref{ss:prop}, and then we compute eigenvalues of the subshift from the proper substitution in Subsection~\ref{ss:eig}.

\subsection{Proprification algorithm} \label{ss:prop}
    
    In this subsection, we compute a proprification, as defined in Subsection~\ref{sec:subst}.
    We do it by following the proprification algorithm of F. Durand and S. Petite (see~\cite{durand} and see Corollary~9 in~\cite{durand-petite} for more details).
	
	We start with an input primitive substitution $\sigma$.
	The first step is to compute the \defi{return substitution}.
	To do it
	\begin{itemize}
		\item Replace $\sigma$ by a power of itself to ensure that it has an infinite fixed point.
		\item Let $a \in A$ be the first letter of a left-infinite fixed point $u \in A^\N$.
			We call \defi{return word} on letter $a$, a word $w$ such that $wa$ is a subword of $u$,
			and such that $w$ has a unique occurrence of the letter $a$, at first position.
			
			Let $w_0 \in A^*$ be the unique return word such that $u$ starts by $w_0$.
			Start with $\mathcal{S} = \{w_0\}$.
			
		\item Take out an element $w$ from $\mathcal{S}$.
				Decompose $\sigma(w)$ as a product of return words (such decomposition is unique).
				Add to $\mathcal{S}$ every return word $w$ not already seen.
				Continue until $\mathcal{S}$ is empty.
	\end{itemize}
	The number of return words being finite, this terminates and gives a return substitution $\tau$, whose alphabet is the set $\mathcal{R}$ of return words.
	
	Then, we define a substitution $\xi$ over the alphabet $B = \{ (r,p) \mid r \in \mathcal{R}, 1\leq p \leq \abs{r} \}$ by
	\[
		\xi(r,p) = \left\{ \begin{array}{ll}
								\psi((\tau(r))_p) & \text{ if } 1 \leq p < \abs{r} \\
								\psi((\tau(r))_{[\abs{r}, \abs{\tau(r)}]}) & \text{ if } p = \abs{r}
							\end{array} \right.
	\]
	where $\psi:\mathcal{R}^* \to B^*$ is the morphism defined by $\psi(r) = (r,1)(r,2) \dots (r,\abs{r})$.
	We can show that a power of $\xi$ is left-proper.
	And we easily get a proper substitution from this.
	But the fact that a power of $\xi$ is left-proper is enough to apply our results.
	
	\begin{ex}
		Let $\sigma : 1 \mapsto 213, 2 \mapsto 4, 3 \mapsto 5, 4 \mapsto 1, 5 \mapsto 21$. \\
		The cube of $\sigma$ has a left-infinite fixed point
		\[
			1421352142135213142135214213142135213142...
		\]
		Return words on $1$ are $a = 142$, $b = 1352$ and $c = 13$, and we get the return substitution
		$
		\tau : a \mapsto ababc, b \mapsto abacabc, c \mapsto abac
		$
		which is left-proper.
		
		Then, the substitution $\xi$ is
		$
		0 \mapsto 012, 1 \mapsto 3456, 2 \mapsto 012345678, 3 \mapsto 012, 4 \mapsto 3456, 5 \mapsto 012, 6 \mapsto 78012345678, 7 \mapsto 012, 8 \mapsto 345601278,
		$
		with the identifications $0 = (a,1)$, $1 = (a,2)$, $2 = (a,3)$, $3 = (b,1)$, $4 = (b,2)$, $5 = (b,3)$, $6 = (b,4)$, $7 = (c,1)$, $8 = (c,2)$.
		The square of $\xi$ is left-proper, and its subshift is conjugate to the subshift of $\sigma$.
	\end{ex}
	
\subsection{Computation of eigenvalues for a proper substitution} \label{ss:eig}

Now, a power of $\sigma$ is assumed to be left-proper.
The computation of eigenvalues of the subshift is as follows.

Let $\K$ be the splitting field of the characteristic polynomial of the incidence matrix $M$.
Let $v_0, \dots, v_k$ be a family of vectors of $\K^A$ formed as a concatenation of bases of generalized eigenspaces for eigenvalues of modulus $\geq 1$, and with $v_0$ the Perron eigenvector of sum $1$.
Then, we compute a set $\mathcal{S}$ as follow.
Start with $\mathcal{S} = \emptyset$.
Then, for every $i \in \{1, \dots, k\}$,
\begin{itemize}
	\item if $v_i$ has sum zero, then add it to $\mathcal{S}$,
	\item otherwise normalize $v_i$ such that it has sum $1$, then add $v_i - v_0$ to $\mathcal{S}$.
\end{itemize}
	
	Now, the possible row vectors $w \in \Z^A$ of Theorem~\ref{thm:vp} are exactly those that are orthogonal to every vector of $\mathcal{S}$. We can describe this set as the kernel of an integer matrix by the following.
	
Choose a basis of the number field $\K$ seen as a $\Q$-vector space.
Decompose each vector of $\mathcal{S}$ in this basis.
Form a matrix $M_{\mathcal{S}}$ whose columns are these components.
Multiply in place the matrix $M_{\mathcal{S}}$ by an integer in order to have $M_{\mathcal{S}}$ with coefficients in $\Z$.
	
Then, we obtain eigenvalues as the set
\[
	\{ e^{2 i \pi w v_0} \mid w \in \Z^A \text{ and } w M_{\mathcal{S}} = 0\}.
\]
And we can completely describe this set by computing a basis of the $\Z$-module of such $w$, and then computing a basis of the $\Z$-module of possible $w v_0$, using for example the Schmidt normal form.

\begin{ex}[Example~2 of \cite{FMN}]
	Let $\sigma : a \mapsto abdd, b \mapsto bc, c \mapsto d, d \mapsto a$.
	Its incidence matrix 
	\[
		\left(\begin{array}{rrrr}
			1 & 0 & 0 & 1 \\
			1 & 1 & 0 & 0 \\
			0 & 1 & 0 & 0 \\
			2 & 0 & 1 & 0
		\end{array}\right)
	\]
	is irreducible and has two eigenvalues of modulus $\geq 1$, associated to eigenvectors of non-zero sum.
	Hence, the set $\mathcal{S}$ is reduced to one element which is the difference of the two different eigenvectors of sum $1$.
	Here, both eigenvectors live in the field is $\Q(\beta)$, with $\beta$ the Perron eigenvalue, so we can do the computation in this field rather than the splitting field.
	The computation in this field gives $\mathcal{S} = \{ \frac{1}{41} (-6 \beta^{3} + 9 \beta^{2} - 17 \beta + 7,\,18 \beta^{3} - 27 \beta^{2} - 31 \beta + 20,\,-14 \beta^{3} + 21 \beta^{2} + 15 \beta - 11,\\
	2 \beta^{3} - 3 \beta^{2} + 33 \beta - 16)^t \}$. Thus, in the basis $(1, \beta, \beta^2, \beta^3)$, the matrix $M_\mathcal{S}$ is
	\[
		M_\mathcal{S} = \left(\begin{array}{rrrr}
							7 & -17 & 9 & -6 \\
							20 & -31 & -27 & 18 \\
							-11 & 15 & 21 & -14 \\
							-16 & 33 & -3 & 2
						\end{array}\right).
	\]
	Then, the set of row vectors $w \in \Z^4$ such that $w M_\mathcal{S} = 0$ is the $\Z$-module generated by $(1,1,1,1)$ and $(0,3,4,1)$.
	Then, we get
	\[
		\{ w v_0 \mid w \in \Z^A \text{ and } w M_{\mathcal{S}} = 0\} = \Z + (-\beta^2 + \beta + 2) \Z = \Z[\sqrt{2}],
	\]
	where $v_0$ is the Perron eigenvector of sum $1$.
	
	We can moreover check that $\sigma$ has no non-trivial coboundary and it has an initials period of $1$ (see~\cite{host} for more details), so the eigenvalues of the subshift are indeed $e^{2 i \pi n \sqrt{2}}$, $n \in \Z$.
	The computation from a proprification leads to the same result.
	Notice that there is a mistake in~\cite{FMN} Example~2. They claim that eigenvalues are $e^{i \pi n \sqrt{2}}$, $n \in \Z$, but their computation leads to the same result as us, they just forgot a $2$ in their conclusion.
\end{ex}

We provide an implementation of this algorithm in the Sage computing system (see~\url{https://www.sagemath.org/}):

\noindent\url{http://www.i2m.univ-amu.fr/perso/paul.mercat/Eigenvalues.html}

\section{Finite extension of a torus translation} \label{sec:ef}

This section mainly aims to prove Theorem~\ref{thm:ef}.
But before we show that the hypothesis that the Perron eigenvalue is Pisot is necessary.

\begin{prop} \label{prop:pisot}
	Let $\sigma$ be a primitive pseudo-unimodular substitution.
	If the Perron eigenvalue $\beta$ of the incidence matrix is not Pisot, then the subshift has strictly less rationally independent eigenvalues than the degree of $\beta$.
\end{prop}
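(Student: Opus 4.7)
The plan is to combine Theorem~\ref{thm:vp}, applied to a proprification of $\sigma$, with a Galois-theoretic constraint forcing the exponents $\alpha$ of all eigenvalues $e^{2i\pi\alpha}$ of the subshift into a proper $\Q$-subspace of $\Q(\beta)$. First I would use Theorem~\ref{thm:dp} to replace $\sigma$ by a conjugate proper pseudo-unimodular substitution $\xi$ with Perron eigenvalue $\beta' = \beta^l$, so that the remark following Theorem~\ref{thm:vp} places every eigenvalue exponent $\alpha$ inside $\Q(\beta') \subseteq \Q(\beta)$. If $\deg(\beta') < \deg(\beta)$, the $\Q$-span of the $\alpha$'s already has dimension strictly less than $\deg(\beta)$ and we are done, so I may assume $\deg(\beta') = \deg(\beta) = d+1$. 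In that case the map $\gamma \mapsto \gamma^l$ is injective on the set of Galois conjugates of $\beta$, and a non-Pisot conjugate $\gamma \neq \beta$ with $|\gamma|\geq 1$ (which exists by hypothesis) produces a conjugate $\gamma^l \neq \beta'$ of $\beta'$ with $|\gamma^l| \geq 1$, using that no conjugate of $\beta>1$ is a root of unity; hence $\beta'$ is also non-Pisot.

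Next I would exploit the Galois action. Let $v_0 \in \Q(\beta')^B$ be the Perron eigenvector of $M_\xi$ normalized to have sum $1$ (well-defined since $\beta'$ is simple by Perron-Frobenius). Let $L$ denote a Galois closure of $\Q(\beta')$ and fix $g \in \operatorname{Gal}(L/\Q)$ with $g(\beta') = \gamma^l$; then $v_{\gamma'} := g(v_0)$ is the unique eigenvector of $M_\xi$ for $\gamma^l$ with sum $1$. Applying Theorem~\ref{thm:vp} to $\xi$, any eigenvalue $e^{2i\pi\alpha}$ of the subshift comes with $w \in \Z^B$ satisfying $wv_0 = \alpha$ and $wv_{\gamma'} = \alpha$. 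Since $w$ has integer entries and is therefore Galois-invariant, $wv_{\gamma'} = g(wv_0) = g(\alpha)$, whence $g(\alpha) = \alpha$ and $\alpha \in F := \Q(\beta') \cap L^{g}$. But $g|_{\Q(\beta')}$ sends $\beta'$ to $\gamma^l \neq \beta'$ and is therefore nontrivial, so $F$ is a proper $\Q$-subspace of $\Q(\beta')$ and $\dim_\Q F < d+1 = \deg(\beta)$. The $\Q$-span of the exponents $\alpha$ lies in $F$, so the subshift has strictly fewer than $\deg(\beta)$ independent eigenvalues.

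The main obstacle is the bookkeeping around the proprification step: one must verify that the non-Pisot property of $\beta$ transfers to $\beta' = \beta^l$ in the critical case $\deg(\beta') = \deg(\beta)$, which relies both on the injectivity of $\gamma \mapsto \gamma^l$ on Galois conjugates in that case, and on the fact that no conjugate of the Perron eigenvalue is a root of unity. Once this transfer is secured, the Galois-fixed-field argument in the body of the proof is routine.
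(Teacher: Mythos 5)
Your proposal is correct and takes essentially the same route as the paper: after proprification, Theorem~\ref{thm:vp} applied to the Perron eigenvector of sum $1$ and to its image under an embedding sending the Perron eigenvalue to a conjugate of modulus $\geq 1$ forces every exponent $\alpha$ to satisfy $g(\alpha)=\alpha$ for a nontrivial embedding $g$, hence to lie in a proper $\Q$-subspace of $\Q(\beta')$ of dimension $<\deg(\beta)$. The paper phrases this with the field morphism $\varphi:\Q(\beta)\to\Q(\gamma)$ rather than a Galois closure and is terser about the proprification (whose effect on the Perron eigenvalue and its degree you track more carefully), but the argument is the same.
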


\begin{proof}
	Thanks to Theorem~\ref{thm:dp}, we can assume that $\sigma$ is proper.
	Let $\beta > 1$ be the Perron eigenvalue of the incidence matrix.
	Let $\gamma$ be a conjugate of $\beta$ with $\abs{\gamma} \geq 1$.
	Let $\varphi : \Q(\beta) \to \Q(\gamma)$ be the morphism of fields such that $\varphi(\beta) = \gamma$.
	Let $e^{2 i \pi \alpha}$ be an eigenvalue of the subshift.
	Let $v_\beta$ be the Perron eigenvector of sum $1$.
	Then, $v_\gamma = \varphi(v_\beta)$ is an eigenvector of sum $1$ for the eigenvalue $\gamma$.
	By Theorem~\ref{thm:vp}, there exists a row vector $w \in \Z^A$ such that $\alpha = w v_\beta = w v_\gamma$.
	Thus, we have $\varphi(\alpha) = \alpha$.
	Hence, $\alpha$ lives in the $\Q$-vector space $\{x \in \Q(\beta) \mid \varphi(x) = x\}$ whose dimension is strictly less than $\deg(\beta)$ since it doesn't contains $\beta$.
\end{proof}

\subsection{Proof of Theorem~\ref{thm:ef}}

    Our proof of Theorem~\ref{thm:ef} is similar to the proof of the main theorem in~\cite{durand-petite}.
    The idea is the following.
    We start by showing that there exists rationally independent eigenvalues.
    Then, we get a minimal translation on the torus $\T^d$ as a factor, with a Rauzy fractal $R'$.
    Then, we use the fact that the substitution is proper to construct a domain exchange on another Rauzy fractal $R$, which is conjugate to the subshift.
    And we define a map $\psi : R \to R'$ being a translation by pieces, thus finite-to-one, so the extension is finite.
    
	\tikzcdset{row sep/normal=1.5cm}
	\tikzcdset{column sep/normal=1.5cm}
	\begin{center}
		\begin{tikzcd}
			\Omega_\xi \arrow[out=120,in=60,loop,"S"] \arrow[r,"\sim", hook, two heads] & \Omega_\sigma \arrow[out=120,in=60,loop,"S"] \arrow[rd, "\phi_{V'}", two heads] \arrow[r,"\phi_V", hook, two heads] & R \arrow[out=120,in=60,loop,"E"] \arrow[d, "\psi", dashed, two heads] &\\
			& & R' \arrow[r, "\pi", two heads] & \T^d \arrow[out=120,in=60,loop,"T_\alpha"]
		\end{tikzcd}
	\end{center}

    \subsubsection{Existence of rationally independent eigenvalues}
	Let us show that there exists rationally independent numbers $1$, $\alpha_1$, ..., $\alpha_d$
	such that $e^{2 i \pi \alpha_1 }$, ..., $e^{2 i \pi \alpha_d}$ are eigenvalues of the subshift.
	We decompose the minimal polynomial of $M = M_\sigma$ in the form $\pi_\beta X^k Q$, where $\pi_\beta$ is the minimal polynomial of the Perron eigenvalue $\beta$, and with $Q(0) \neq 0$.
	We can assume that $k=1$ up to replace $\sigma$ by $\sigma^k$.
	Then, the row vector $(1, \dots, 1)$ is orthogonal to $\ker(Q(M))$ thank to the following lemma.
	
	\begin{lemme} \label{lem:conj}
		Let $M$ be a matrix with integer coefficients.
		If $\gamma$ is an eigenvalue of $M$ such that every associated generalized eigenvector has sum zero, then every generalized eigenvector associated with a conjugate of $\gamma$ also has sum zero.
	\end{lemme}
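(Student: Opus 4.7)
The plan is to prove the lemma by Galois-theoretic transfer. Since $M$ has integer entries, the coordinatewise action of a Galois automorphism on $\K^n$ commutes with $M$ and preserves sums of coordinates. The sum-zero property should therefore transfer from the generalized eigenspace of $\gamma$ to that of any algebraic conjugate $\gamma'$.

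Concretely, I would let $\K$ be the splitting field of the characteristic polynomial of $M$ over $\Q$, pick a conjugate $\gamma'$ of $\gamma$, and choose a $\Q$-automorphism $\varphi$ of $\K$ with $\varphi(\gamma) = \gamma'$; such a $\varphi$ exists because $\gamma$ and $\gamma'$ share their minimal polynomial over $\Q$. Extend $\varphi$ coordinatewise to a bijection $\Phi : \K^n \to \K^n$. Since the entries of $M$ lie in $\Z$, they are fixed by $\varphi$, so $\Phi(Mv) = M\Phi(v)$ for all $v$. Therefore $\Phi$ sends $\ker((M - \gamma I)^k)$ into $\ker((M - \gamma' I)^k)$ for every $k$, hence maps the generalized eigenspace $E_\gamma$ into $E_{\gamma'}$.

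Next I would argue that this inclusion is an equality. Because $\Phi(cv) = \varphi(c)\Phi(v)$ for $c \in \K$, the image $\Phi(E_\gamma)$ is a $\K$-subspace of $\K^n$, of the same $\K$-dimension as $E_\gamma$. Since $\gamma$ and $\gamma'$ are roots of the same irreducible factor over $\Q$ of the characteristic polynomial of $M$, they share the same algebraic multiplicity, and so $\dim_\K E_\gamma = \dim_\K E_{\gamma'}$. Hence $\Phi(E_\gamma) = E_{\gamma'}$. Finally, since the row vector $(1,\dots,1)$ has $\varphi$-fixed entries, we have $\sum_i \varphi(v_i) = \varphi(\sum_i v_i)$; applied to any $v \in E_\gamma$ (which has sum zero by hypothesis), this gives $\Phi(v)$ sum zero, and varying $v$ covers all of $E_{\gamma'}$.

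There is no real obstacle: the whole argument is a routine application of the fact that $M \in M_n(\Z)$ is Galois-equivariant. I note in passing that the hypothesis $|\gamma| \geq 1$ plays no role in the proof itself and is kept only because that is the regime in which the lemma is invoked.
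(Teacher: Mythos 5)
Your proof is correct and uses essentially the same idea as the paper: transport by a Galois morphism fixing the integer matrix $M$ and the rational vector $(1,\dots,1)$, which carries the sum-zero generalized eigenspace of $\gamma$ to that of any conjugate. The only cosmetic difference is direction — the paper maps a generalized eigenvector of the conjugate into $E_\gamma$ via $\varphi:\Q(\beta)\to\Q(\gamma)$ and uses injectivity, while you map $E_\gamma$ onto $E_{\gamma'}$ and invoke equal multiplicities — but this is the same argument in substance.
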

	
	\begin{proof}
		Let $\beta$ be a conjugate of $\gamma$, and let $\varphi : \Q(\beta) \to \Q(\gamma)$ be the morphism of fields sending $\beta$ to $\gamma$.
		Let $v$ be a generalized eigenvector for the eigenvalue $\beta$.
		Let $k \geq 1$ such that $(M - \beta I)^k v = 0$.
		Then it gives $(M - \gamma I)^k \varphi(v) = 0$.
		Thus, $\varphi(v)$ is a generalized eigenvector for the eigenvalue $\gamma$.
		And, we have $\varphi((1,...,1) v) = (1,...,1) \varphi(v) = 0$, so $(1,...,1) v = 0$.
	\end{proof}
	
	Let $w_0 = (1, \dots, 1)$, $w_1$, \dots, $w_d$ be row vectors in the orthogonal complement of $\ker(Q(M))$ that are linearly independent modulo the orthogonal complement of $\ker((\pi_\beta Q)(M))$. As we have rational subspaces, we can assume that every vector $w_i$ is in $\Z^A$.
	
	Let $v_0$ be the Perron eigenvector of sum $1$. Let us show that $1$, $w_1 v_0$, \dots, $w_d v_0$ are rationally independent.
	Let $c_0$, \dots, $c_d \in \Q$ such that $c_0 w_0 v_0 + \dots + c_d w_d v_0 = 0 = (c_0 w_0 + \dots + c_d w_d) v_0$.
	Then, as $c_0 w_0 + \dots + c_d w_d$ is rational, it is orthogonal to every conjugate of $v_0$, thus to $\ker(\pi_\beta(M))$.
	As $c_0 w_0 + \dots + c_d w_d$ is orthogonal to $\ker((\pi_\beta Q)(M))$, every $c_i$ is zero.
	And by Proposition~\ref{prop:vp}, $e^{2i \pi \alpha_i}$ are eigenvalues of the dynamical system, where $\alpha_i = w_i v_0$, since every generalized eigenvector of zero sum, for an eigenvalue of modulus at least $1$, is in $\ker(Q(M))$.
	
	\subsubsection{Minimal torus translation as a factor}
	
	Now, we show that the subshift is an extension of a minimal translation of the torus $\T^d$.
	Let $V'$ be the matrix whose rows are $\alpha_i (1,...,1) - w_i$, $i = 1,...,d$.
	By Lemma~\ref{lem:charw2} the convergence $V' M^n \xrightarrow[n \to \infty]{} 0$ is exponential.
	Thus, by Lemma~\ref{lem:cv}, the map $\phi_{V'} = \phi_{u, V', \Omega_\sigma}$ of Proposition~\ref{prop-rauzy-fractal} is well-defined. It permits to define a Rauzy fractal $R' = \phi_{V'}(\Omega_\sigma)$.
	And for every $x \in \Omega_\sigma$ we have $\phi_{V'} (S x) = \phi_{V'}(x) + V' \ab(x_0) = \phi_{V'}(x) + \alpha$ mod $\Z^A$, where $\alpha = (\alpha_1, ..., \alpha_d) \in \R^d$.
	Thus, by the map $\pi \circ \phi_{V'} : \Omega \to \T^d$, the subshift is an measurable extension of the translation by $\alpha$ on the torus $\T^d$.
	And this translation is minimal since $1$, $\alpha_1$, ..., $\alpha_d$ are linearly independent over $\Q$.

	\noindent To end the proof of Theorem~\ref{thm:ef}, it remains to show that this extension is finite.
	
	\subsubsection{Construction of a domain exchange}
	
	Now, we show that the subshift is conjugate to a domain exchange.
	Let $\Pi$ be the projection on $\im(M) = \ker((\pi_\beta Q)(M))$ along $\ker(M)$. We have $\Pi M = M$, and we can use the following lemma.
	
	\begin{lemme}
		Let $\Pi$ be a rational matrix such that $\Pi M = M$.
		Then, the row vectors $(1,...,1)\Pi$, $w_1 \Pi$, ..., $w_d \Pi$ are linearly independent.
	\end{lemme}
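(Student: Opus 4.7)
The plan is to argue by contradiction. Suppose there exist scalars $c_0, c_1, \dots, c_d$, not all zero, such that
\[
	c_0 (1,\dots,1)\Pi + c_1 w_1 \Pi + \cdots + c_d w_d \Pi = 0,
\]
and write this as $w\Pi = 0$ where $w = c_0 w_0 + c_1 w_1 + \cdots + c_d w_d$ with $w_0 = (1,\dots,1)$. The assumption $\Pi M = M$ means that $\Pi$ restricts to the identity on $\im(M)$, so in particular $\im(M) \subseteq \im(\Pi)$. Hence the relation $w\Pi = 0$ forces $w$ to vanish on $\im(M)$.

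The next step identifies $\im(M)$ with $\ker((\pi_\beta Q)(M))$. After the reduction $k=1$ made at the start of the proof of Theorem~\ref{thm:ef}, the minimal polynomial of $M$ factors as $\pi_\beta(X) \cdot X \cdot Q(X)$ with pairwise coprime factors ($\pi_\beta$ irreducible with $\pi_\beta(0)\neq 0$, and $Q(0)\neq 0$). The primary decomposition therefore yields
\[
	\R^A = \ker(M) \oplus \ker(\pi_\beta(M)) \oplus \ker(Q(M)),
\]
and $\im(M)$, being the $M$-stable supplement of $\ker(M)$ in this decomposition, coincides with $\ker(\pi_\beta(M)) \oplus \ker(Q(M)) = \ker((\pi_\beta Q)(M))$.

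Consequently $w$ lies in the orthogonal of $\ker((\pi_\beta Q)(M))$. But the vectors $w_0, w_1, \dots, w_d$ were chosen in the proof of Theorem~\ref{thm:ef} to be linearly independent modulo this very orthogonal, so the only possibility is $c_0 = c_1 = \cdots = c_d = 0$, contradicting the assumption.

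The only real delicate point is the algebraic identification $\im(M) = \ker((\pi_\beta Q)(M))$; it hinges on having $k=1$ in the decomposition $\pi_\beta X^k Q$ of the minimal polynomial, which is precisely why the reduction to $k=1$ was performed upstream (otherwise $\ker(M)$ would have to be replaced by a generalized kernel and the complementarity would fail). Every other step is routine linear algebra.
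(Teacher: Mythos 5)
Your proof is correct, but it follows a genuinely different route from the paper's. The paper argues dynamically: from $\sum_i c_i w_i \Pi = 0$ (with the $c_i$ taken in $\Q$, which is where the rationality of $\Pi$ and of the $w_i$ is used) it multiplies on the right by $M^n$, uses $\Pi M^n = M^n$ for $n \geq 1$ together with the estimate $w_i M^n = \alpha_i (1,\dots,1) M^n + o(1)$ coming from Lemma~\ref{lem:charw2}, so that $(c_0 + c_1\alpha_1 + \dots + c_d\alpha_d)(1,\dots,1)M^n = o(1)$; since $(1,\dots,1)M^n$ diverges this scalar vanishes, and the $\Q$-independence of $1, \alpha_1, \dots, \alpha_d$ established just before gives $c_i = 0$ for all $i$. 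You instead argue purely linear-algebraically: $\Pi M = M$ means $\Pi$ fixes $\im(M)$ pointwise, so $w\Pi = 0$ forces $w = \sum_i c_i w_i$ to vanish on $\im(M) = \ker((\pi_\beta Q)(M))$ (valid after the reduction to $k=1$, and this identification is in any case asserted by the paper when it defines $\Pi$), and then the very choice of $w_0, \dots, w_d$ as linearly independent modulo the orthogonal of $\ker((\pi_\beta Q)(M))$ kills all the $c_i$. Your route needs neither the rationality of $\Pi$, nor the asymptotics of Lemma~\ref{lem:charw2}, nor the previously established rational independence of the $\alpha_i$; its only extra inputs are the $k=1$ normalization (already performed upstream) and the specific way the $w_i$ were selected. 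Conversely, the paper's argument never uses $\im(M) = \ker((\pi_\beta Q)(M))$ and would apply to any rational $\Pi$ with $\Pi M = M$ without controlling the index of the eigenvalue $0$. Both arguments are complete in the context of the proof of Theorem~\ref{thm:ef}.
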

	
	\begin{proof}
		Let $c_0$, $c_1$, ..., $c_d$ be such that $c_0 (1,...,1) \Pi + c_1 w_1 \Pi + ... + c_d w_d \Pi = 0$.
		We can assume that every $c_i$ is in $\Q$ since $\Pi$ and $w_i$ have rational coordinates.
		Then, $w_i M^n = \alpha_i(1,...,1) M^n + o_{n \to \infty}(1)$,
		so
		\[
		    (c_0 + c_1 \alpha_1 + ... + c_d \alpha_d) (1,...,1) M^n = o_{n \to \infty}(1).
		\]
		As, $(1,...,1)M^n$ diverges when $n$ tends to infinity, we get $c_0 + c_1 \alpha_1 + ... + c_d \alpha_d = 0$.
		Then, the linear independence over $\Q$ gives $c_i=0$ for every $i$.
	\end{proof}
	
	This lemma tells us that $V = V' \Pi$ is of rank $d$.
	Now, let us show that $V$ satisfies Hypothesis~\ref{hyp:rauzy}, in order to have a usual Rauzy fractal.
	
	We want to show that $\ker((M-\beta I)MQ(M)) \subseteq \ker(V)$.
	As $\rk(V) = d = \codim \ker((M-\beta I)MQ(M))$,
	it will give also the other inclusion.
	Since polynomials $X-\beta$, $X$ and $Q$ are pairwise coprime, we have
	\[
	    \ker((M-\beta I)MQ(M)) = \ker(M-\beta I) \oplus \ker(M) \oplus \ker(Q(M)).
	\]
	By definition of $\Pi$, we have $\ker(M) = \ker(\Pi) \subset \ker(V)$.
	By construction, each $w_i$ is orthogonal to $\ker(Q(M))$, and the row vector $(1, \dots, 1)$ is also orthogonal to $\ker(Q(M))$, thus we have $\ker(Q(M)) \subseteq \ker(V')$.
	And we also have $\ker(M-\beta I) \subseteq \ker(V')$ by Lemma~\ref{lem:charw2}.
	Since, $\Pi M = M \Pi$, we obtain $\ker(M-\beta I) \subset \ker \Pi (M-\beta I) = \ker (M - \beta I) \Pi \subseteq \ker(V)$, and similarly $\ker Q(M) \subset \ker(V)$.
	Then, we get the equality $\ker(V) = \ker((M-\beta I)M Q(M))$.
	
	Hence, Hypothesis~\ref{hyp:rauzy} is satisfied, and $\phi_{u, V, \Omega_\sigma}$ is well-defined and defines a usual Rauzy fractal, for an admissible fixed point $u$ (which exists up to replace $\sigma$ by a power of itself).
	
	Moreover, since the substitution $\sigma$ is proper, Proposition~\ref{props-rauzy-fractal} gives us a domain exchange
	on $R = \phi_V(\Omega_\sigma)$ measurably conjugated to the subshift.
	
	\subsubsection{Translation by pieces}
	
	Now, we show that the almost everywhere defined map $\psi = \phi_{V'} \circ \phi_V^{-1} : R \to R'$ is a translation by pieces.
	For almost every $x \in \Omega$, we have
	\[
		\psi \left( \sum_{n \in \N} V M^n t_n(x) \right) = \sum_{n \in \N} V' M^n t_n(x).
	\]
	But for every $n \geq 1$ we have $V M^n = V \Pi M^n = V' M^n$.
	Thus $\psi$ is a translation by $(V'-V) t$ on each piece $\phi_V(t_0^{-1}(t)) = \bigcup_{b \xrightarrow{t} a} N R_b + Vt$, for every label $t$ of the abelianized prefix automaton, where $t_0 = \ab \circ p_0 : \Omega_\sigma \to \Z^d$ is the map defined in Subsection~\ref{ss:dt}.
	
	\subsubsection{End of the proof of Theorem~\ref{thm:ef}}
	
	Since $\psi$ is a translation by pieces, it is finite-to-one.
	The map $\pi : \R^d \to \T^d$ restricted to $R'$ is also finite-to-one since $R'$ is bounded.
	Hence $\pi \circ \psi : (R,E, \lambda) \to (\T^d, T_\alpha, \lambda)$ is finite-to-one, where $T_\alpha$ is the translation by $\alpha = (\alpha_1, \dots, \alpha_d)$.
	Thus, the continuous map $\pi \circ \phi_{V'} : (\Omega, S) \to (\T^d, T_\alpha)$ is almost everywhere finite-to-one.
	It ends the proof of Theorem~\ref{thm:ef}.

\begin{rem}
	The map $\psi = \phi_{V'} \circ \phi_{V}^{-1}$ can always be defined almost everywhere, as in this proof, as soon as $V$ and $V'$ are well-defined with $\phi_V$ almost everywhere invertible.
	And we have
	\[
		\psi : \begin{array}{ccc}
					R &\to& R' \\
					\sum_{n=0}^\infty V M^n t_n &\mapsto& \sum_{n=0}^\infty V' M^n t_n,
				\end{array}
	\]
	where $t_n$ are labels of left-infinite paths in the abelianized prefix automaton.
	But in general this map does not seems to be always finite-to-one (see Example~\ref{ex:2pisots2}).
\end{rem}

In the following subsection, we prove the last remaining theorem to prove.

\subsection{Proof of Theorem~\ref{thm:wip}}
	By Theorem~\ref{thm:dp}, we can assume that $\sigma$ is proper.
	And up to replace $\sigma$ by a power of itself, we can also assume that the only eigenvalue of the matrix being a root of unity is $1$.
	
	If there is no generalized eigenvector $v$ of sum $1$ for the eigenvalue $1$, then the hypothesis of Theorem~\ref{thm:ef} is satisfied, and the subshift is a finite extension of a translation on the torus $\T^d$.
	
	Otherwise, we use Theorem~\ref{thm:vp}. Let $v$ be a generalized eigenvector of sum $1$ for the eigenvalue $1$.
	As the generalized eigenspace for the eigenvalue $1$ is rational, we can assume that $v$ has rational coordinates.
	Then the eigenvalues of the subshift are of the form $e^{2i \pi \alpha}$ with $\alpha = w v \in \Q$, for some row vectors $w \in \Z^A$.
	Hence eigenvalues of the subshift are roots of unity, and it ends the proof of Theorem~\ref{thm:wip}.

\section{Examples} \label{sec:ex}

\begin{ex}[Non-Pisot, \cite{FMN}] \label{ex:fmn1}
	For the primitive unimodular substitution	
	\[
		 a \mapsto abbbccccccccccdddddddd,
		 b \mapsto bccc,
		 c \mapsto d,
		 d \mapsto a
	\]
	the subshift has eigenvalues $e^{2 i n \pi \sqrt{2}}$, $n \in \Z$.
	The characteristic polynomial $x^{4} - 2x^{3} - 7x^{2} - 2x + 1$ of $M_\sigma$ is irreducible, and the Perron eigenvalue $\theta_1$ is not Pisot since there is another root $\theta_4$ of modulus $> 1$.
	The subshift is not weakly mixing, contrary to what is said in~\cite{FMN} Example~1.
	They made a miscalculation. They say that $\alpha = \frac{\theta_1^3}{(1+\sqrt{10})\theta_1 + 11 - \sqrt{10}}Q(\frac{1}{\theta_1}) = \frac{\theta_4^3}{(1 - \sqrt{10})\theta_4 + 11 + \sqrt{10}} Q(\frac{1}{\theta_4})$ can take only integer values, where $Q$ is a polynomial over $\Z$, but it is false since for example $Q(X) = 6 X - 11 X^2 + 19 X^3$ is a solution that gives $\alpha = \frac{7 + \sqrt{2}}{3}$.
\end{ex}

\begin{ex}[Conjugate to a torus translation]
	The subshift of the weakly irreducible Pisot substitution
	$
		1 \mapsto 2,
		2 \mapsto 3,
		3 \mapsto 14,
		4 \mapsto 5,
		5 \mapsto 1425
	$
	is measurably conjugate to a translation on $\T^2$.
	The incidence matrix of a left-proprification has only one eigenvalue of modulus $\geq 1$, thus Theorem~\ref{thm:ef} applies.
	The domain exchange and fundamental domain are depicted in Figure~\ref{fig:tribo}.
	The eigenvalues of the dynamical system are $e^{2i \pi \alpha}$, with $\alpha \in \frac{2 \beta^2 + \beta - 6}{11} \Z + (\beta^2 + \beta) \Z$, where $\beta$ is the Pisot number root of $X^3-X^2-X-1$.
\end{ex}

\begin{figure}
	\centering
	\caption{Domain exchange and fundamental domain of $\T^2$ for a subshift being measurably conjugate to a translation of $\T^2$} \label{fig:tribo}
	\includegraphics[scale=.25]{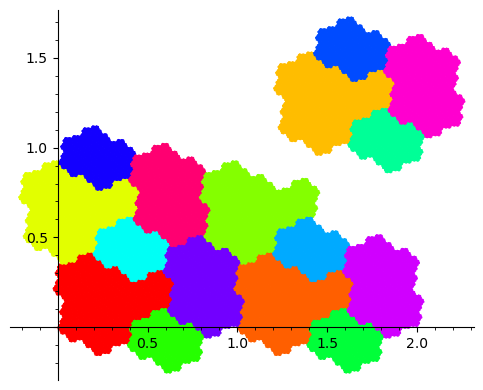} \quad
	\includegraphics[scale=.25]{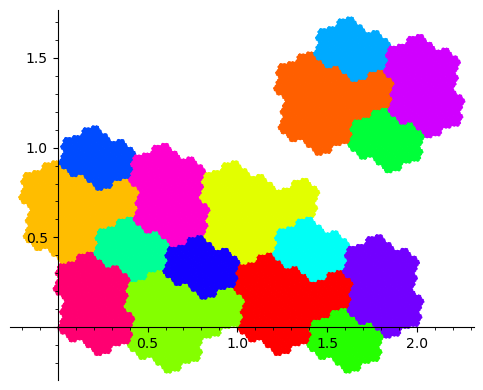} \quad \quad
	\includegraphics[scale=.25]{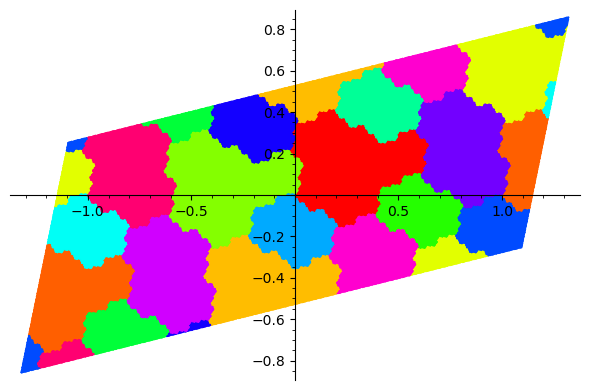}
\end{figure}

\begin{ex}[$2$--to--$1$ extension of a torus translation]
	It can be shown that the subshift of the weakly irreducible Pisot substitution
	$
		\sigma:
		a \mapsto Ab,
		b \mapsto A,
		A \mapsto aB,
		B \mapsto a
	$
	is a $2$--to--$1$ extension of a translation on $\T^1$.
	Note that it is a $2$--to--$1$ cover of the subshift of the original Fibonacci substitution $a \mapsto ab, b \mapsto a$.
	A left-proprification of $\sigma$ has an incidence matrix with eigenvalues zero, roots of unity associated to generalized eigenvectors of sum zero, and golden number and conjugate. Thus Theorem~\ref{thm:ef} applies.
	An approximation of the graph of $\psi$ is plotted in Figure~\ref{fig:psi} (right).
	The eigenvalues of the subshift are $e^{2i\pi n \varphi}$, $n \in \Z$, where $\varphi$ is the golden number.
\end{ex}

\begin{ex}[Presumably infinite extension of a torus translation]
	The subshift of the primitive substitution
	$
		 1 \mapsto 11116,
		 2 \mapsto 1,
		 3 \mapsto 1111112,
		 4 \mapsto 1111113,$
		 $
		 5 \mapsto 466,
		 6 \mapsto 566
 	$
	is a (presumably infinite) extension of a minimal circle translation.
	Its incidence matrix is not diagonalizable.
	Its characteristic polynomial is
	$
		(x^2 - 4x - 1) (x^2 - x - 1)^2.
	$
	The eigenvalues of the dynamical system are $e^{2 i \pi \alpha}$ where $\alpha \in \frac{45}{2} \varphi \Z$, where $\varphi$ is the golden number.
\end{ex}

\begin{ex}[Two Pisot, intermediate]
	The subshift of the primitive substitution
	\[
		 1 \mapsto 1116,
		 2 \mapsto 1,
		 3 \mapsto 2,
		 4 \mapsto 3,
		 5 \mapsto 1146,
		 6 \mapsto 566
	 \]
	is neither a finite extension of a minimal translation of $\T^3$ nor weakly mixing.
	Indeed, the eigenvalues of the subshift are $e^{2 i \pi \alpha}$, where $\alpha \in 57 \varphi \Z$, where $\varphi$ is the golden number, but the degree of the Perron eigenvalue is $4$.
	The characteristic polynomial of this matrix is
	\[
		(x^{2} - x - 1) \cdot (x^{4} - 4x^{3} + 2x^{2} - x + 1).
	\]
\end{ex}

\begin{figure}
	\centering
	\caption{Weakly mixing domain exchange associated to a substitution whose incidence matrix has two Pisot eigenvalues} \label{fig:wm}
	\includegraphics[scale=.3]{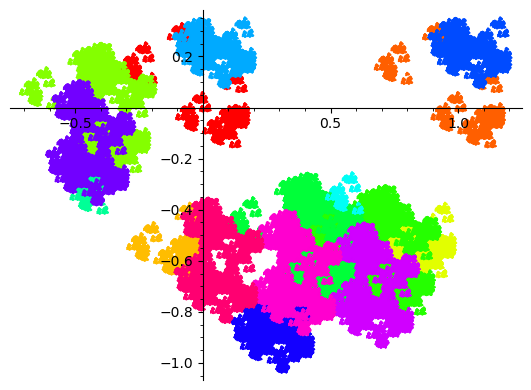} \quad \quad
	\includegraphics[scale=.3]{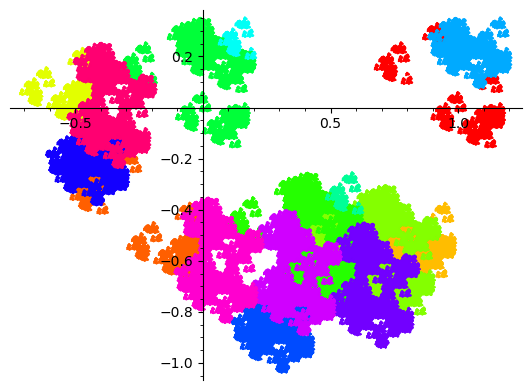}
\end{figure}

\begin{ex}[Two Pisot, weakly mixing]
	The primitive substitution
	\[
		1 \mapsto 15,
		2 \mapsto 2122,
		3 \mapsto 122,
		4 \mapsto 13,
		5 \mapsto 14122
	\]
	gives a weakly mixing subshift.
	Its incidence matrix has two Pisot eigenvalues of degrees $2$ and $3$.
	But we can describe it geometrically with a domain exchange, see Figure~\ref{fig:wm}.
\end{ex}

\begin{ex}[Two Pisot, presumably infinite extension of $(\T^2,T)$]
	The subshift of the substitution
	$
		1 \mapsto 16,
		2 \mapsto 122,
		3 \mapsto 12,
		4 \mapsto 3,
		5 \mapsto 124,
		6 \mapsto 15
	$
	is an (presumably infinite) extension of a translation on the torus $\T^2$.
	Note that the square of this substitution is left-proper.
	The incidence matrix has two Pisot eigenvalues of degrees $3$.
	The eigenvalues of the dynamical system are $e^{2 i \pi \alpha}$, where $\alpha \in 3\beta \Z + 3\beta^3 \Z$, where $\beta$ is the Perron eigenvalue of the incidence matrix.
	We show the domain exchange and the image by $\psi$ in figure~\ref{fig:2pisots:psi}.
\end{ex}

\begin{figure}
	\centering
	\caption{Domain exchange and multi-tiling associated to a substitution whose incidence matrix has two Pisot eigenvalues, and whose subshift is an (presumably infinite) extension of a translation on $\T^2$} \label{fig:2pisots:psi}
	\includegraphics[scale=.25]{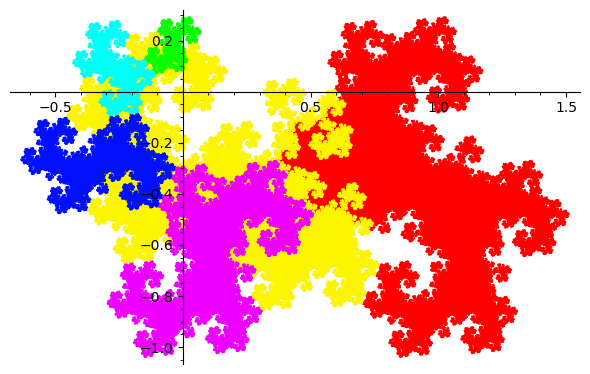}
	\includegraphics[scale=.25]{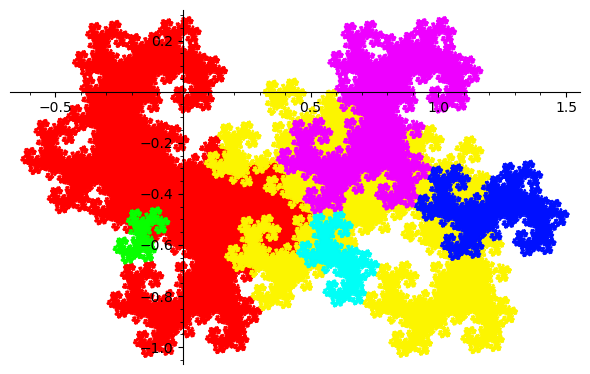}
	\includegraphics[scale=.25]{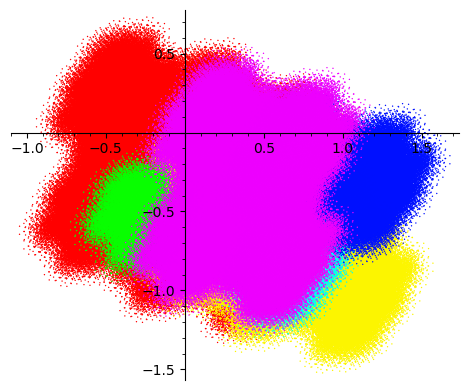}
\end{figure}

\begin{figure}
	\centering
	\caption{Approximation of the graph of the function $\psi$, for an example with two Pisot eigenvalues (left) and for an example weakly irreducible Pisot (right)} \label{fig:psi}
	\includegraphics[scale=.3]{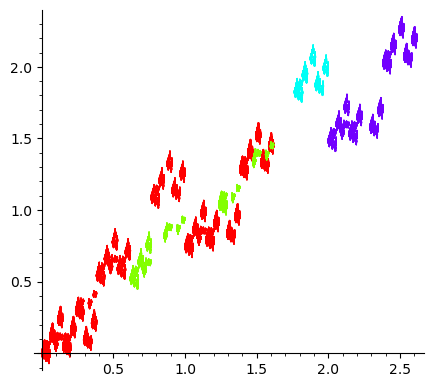} \quad \quad
	\includegraphics[scale=.4]{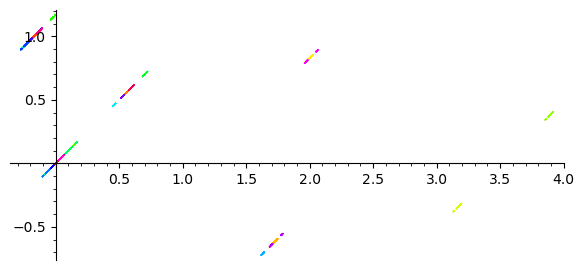}
\end{figure}

\begin{ex}[Two Pisot, presumably infinite extension of $(\T^1,T)$] \label{ex:2pisots2}
	The subshift of the substitution
	$
		1 \mapsto 114,
		2 \mapsto 122,
		3 \mapsto 2,
		4 \mapsto 13
	$
	is an (presumably infinite) extension of a translation on $\R/\Z$.
	The incidence matrix has two Pisot eigenvalues of degrees $2$.
	We plot an approximation of the graph of the function $\psi$ on the Figure~\ref{fig:psi} (left).
	We can see that the almost everywhere defined function $\psi$ seems to be infinite-to-one.
	The eigenvalues of the subshift are $e^{2 i \pi \sqrt{5} n}$ where $n \in \Z$.
\end{ex}

\begin{figure}
	\centering
	\caption{Domain exchange, Rauzy fractal with overlaps, and fundamental domain of $\T^2$ for Example~\ref{ex:timo}} \label{fig:timo}
	\includegraphics[scale=.3]{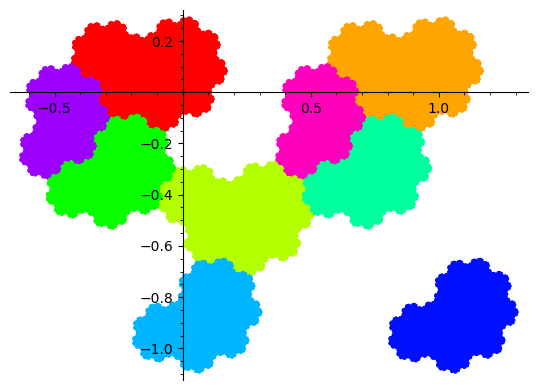} \quad 
	\includegraphics[scale=.3]{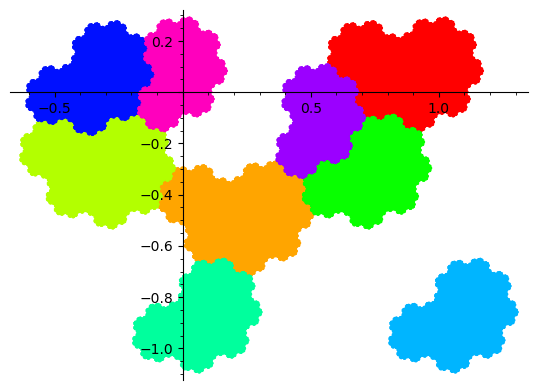} \\
	\includegraphics[scale=.3]{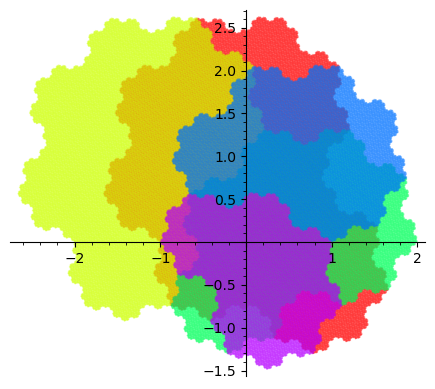} \quad
	\includegraphics[scale=.275]{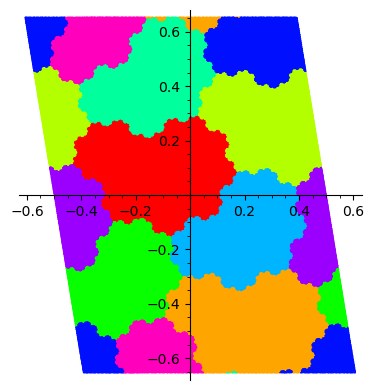} 
\end{figure}

\begin{ex}[due to Timo Jolivet] \label{ex:timo}
	The primitive substitution
	\[
		\sigma:
		1 \mapsto 213,
		2 \mapsto 4,
		3 \mapsto 5,
		4 \mapsto 1,
		5 \mapsto 21
	\]
	is an example for which the usual Rauzy fractal overlaps (see Figure~\ref{fig:timo}).
	However, we can proprify it and it permits to obtain a domain exchange that is measurably conjugate to the subshift of $\sigma$.
	Moreover, the proprification of $\sigma$ satisfies Theorem~\ref{thm:ef}, thus it is also a finite extension of a torus translation.

    Notice that the return substitution of $\sigma^3$ on any letter has only three letters.
	The eigenvalues of the subshift are $e^{2 i \pi \alpha}$, where $\alpha \in \Z[\beta]$, where $\beta$ is the real root of $X^3-2X^2+X-1$.
\end{ex}

\begin{ex}[Two eigenvalues]
    The square of the substitution
    \[
        \sigma:
        \left\{\begin{array}{l}
            0 \mapsto 1203  \\
            1 \mapsto 12 \\
            2 \mapsto 13 \\
            3 \mapsto 03
        \end{array}\right.
    \]
    is proper, primitive and pseudo-unimodular.
    The associated subshift has eigenvalues $\{-1, 1\}$, thus the eigenvalue $1$ of the square $(\Omega_\sigma, S^2)$ is not simple, so the square is not ergodic. By Theorem~IV.1 in~\cite{dekking}, it implies that $(\Omega_\sigma, S^2)$ is not minimal, and this can be indeed easily checked directly.
\end{ex}

\begin{ex}[Family of weakly mixing subshifts]
    For every $n \geq 1$, the substitution
    \[
        \left\{
        \begin{array}l
            a \mapsto ab  \\
            b \mapsto ac^{2n-1} \\
            c \mapsto ac^{2n}
        \end{array}
        \right.
    \]
    is primitive, unimodular and left-proper.
    Its incidence matrix $\begin{pmatrix} 1 & 1 & 1 \\ 1 & 0 & 0 \\ 0 & 2n-1 & 2n \end{pmatrix}$ has eigenvalues $\{1, n \pm \sqrt{n^2+1} \}$, and the eigenvalue $1$ is associated with the eigenvector $\begin{pmatrix} 1 \\ 1 \\ -1\end{pmatrix}$ of sum $1$ with integer coefficients.
    Thus, by Theorem~\ref{thm:vp}, the unique eigenvalue of the subshift is $1$, thus it is weakly mixing.
\end{ex}

\section{Acknowledgements}

I thank Pascal Hubert for his careful reading of this article. And I thank Fabien Durand and Samuel Petite for interesting discussions. I also thank the referee for is careful reading and for all his comments that improved the article.


\end{document}